\documentclass[11pt,a4paper]{article}
\usepackage[T1]{fontenc}
\usepackage[utf8]{inputenc}
\usepackage[english]{babel}
\usepackage{lmodern,microtype}
\usepackage{amsmath,amssymb,amsthm,mathtools}
\usepackage[a4paper,margin=2.35cm,headheight=15pt]{geometry}
\usepackage{enumitem}
\usepackage{xcolor}
\usepackage[hidelinks]{hyperref}
\usepackage{booktabs,needspace}
\definecolor{navy}{HTML}{000000}
\hypersetup{colorlinks=true,linkcolor=navy,citecolor=navy,urlcolor=navy}
\newtheorem{theorem}{Theorem}[section]
\newtheorem{proposition}[theorem]{Proposition}
\newtheorem{lemma}[theorem]{Lemma}
\newtheorem{corollary}[theorem]{Corollary}
\newtheorem{example}[theorem]{Example}
\theoremstyle{remark}
\newtheorem{remark}[theorem]{Remark}

\newcommand{\B}{\mathcal B}
\newcommand{\Ran}{\mathcal R}
\newcommand{\Ker}{\mathcal N}
\newcommand{\cc}{\mathcal K}
\newcommand{\qn}{\mathcal H_0}

\newcommand{\spn}{\operatorname{span}}

\setlist[enumerate]{leftmargin=2em,itemsep=.35em,topsep=.4em}
\title{On Mbekhta's conjecture and pseudo-Fredholm\\ operators on Banach and Hilbert spaces}
\author{Zakariae Aznay \qquad Fadil Chabbabi\\[1mm]
\small Laboratory (A.F.N.L.A), Department of Mathematics, Faculty of Sciences,\\
\small Abdelmalek Essa\^adi University, Tetouan, Morocco\\[1mm]
\small \texttt{aznayzakariae@gmail.com}\qquad \texttt{f.chabbabi@uae.ac.ma}}
\date{}
\begin{document}
\maketitle
\begin{abstract}
We give negative answers to Mbekhta's conjecture \cite[p.~261]{Mbekhta} and to a related conjecture on pseudo-B-Fredholm operators \cite[p.~202]{AZO}. A sufficient condition for pseudo-Fredholmness on Banach spaces is established. It yields a characterization on Hilbert spaces in terms of the closedness of $\cc(T)$, $\cc(T)+\qn(T)$ and $\Ran(T)+\qn(T)$, as well as characterizations of pseudo-B-Fredholm and pseudo-B-Weyl operators on Banach spaces. A counterexample on a separable Banach space shows that the three closedness conditions do not characterize pseudo-Fredholmness in general. We also construct two operators having the same analytic core, quasinilpotent part, and kernels and ranges of all powers, although only one is pseudo-Fredholm. Moreover, a semi-B-Fredholm operator on a Banach space need not be a direct sum of a semi-Fredholm operator and a nilpotent operator, answering the question raised in \cite[p.~113]{AznayKato2023}. Finally, the three closedness conditions in the Hilbert-space characterization are shown to be independent.
\end{abstract}
\noindent\textbf{Mathematics Subject Classification (2020).} Primary 47A53; Secondary 47A10, 47A65, 47B37.\\
\textbf{Keywords.} Pseudo-Fredholm operator; generalized Kato decomposition; analytic core; quasinilpotent part; weighted shift; complemented subspace; quasi-Fredholm operator.

\section{Introduction and preliminaries}
Let $X$ be a complex Banach space and $\mathcal B(X)$ the Banach algebra of all bounded linear operators on $X$. For $T\in\mathcal B(X)$, we denote by $\Ker(T)$ and $\Ran(T)$ its kernel and range, respectively. A subspace $M$ of $X$ is called $T$-invariant if $T(M)\subseteq M$. We write $(M,N)\in Red(T)$ if $M$ and $N$ are closed $T$-invariant subspaces such that $X=M\oplus N$. For a closed $T$-invariant subspace $M$, we denote by $T|_M$ the restriction of $T$ to $M$, viewed as an operator on $M$.

An operator $T$ is called pseudo-Fredholm if there exists $(M,N)\in Red(T)$ such that $T|_M$ is semi-regular and $T|_N$ is quasinilpotent \cite{Mbekhta}. Recall that an operator $S$ is semi-regular if $\Ran(S)$ is closed and $\Ker(S)\subseteq \Ran^\infty(S)$, where $\Ran^\infty(S)=\bigcap_{n\geq 1}\Ran(S^n)$. This decomposition is known as a generalized Kato decomposition. When $T|_N$ is nilpotent, it is called a Kato-type decomposition. Recall that $Q$ is quasinilpotent if $\lim_{n\to\infty}\|Q^n\|^{1/n}=0$. An operator is Fredholm if its range is closed and its kernel and cokernel are finite-dimensional; it is Weyl if it is Fredholm with index zero. Moreover, $T$ is said to be pseudo-B-Fredholm if it can be written as $T=F\oplus Q$, where $F$ is Fredholm and $Q$ is quasinilpotent. If $F$ is a Weyl operator, then $T$ is called pseudo-B-Weyl. An operator is called pseudo semi-B-Fredholm (respectively, quasi semi-B-Fredholm) if it is the direct sum of a semi-Fredholm operator and a quasinilpotent (respectively, nilpotent) operator. For more details on these classes of operators, see \cite{AZO,AznayKato2023}.

The analytic core $\cc(T)$ and the quasinilpotent part $\qn(T)$ are defined by
\[
 \cc(T)=\left\{x\in X:\begin{array}{l}
 \exists c>0,\ \exists(x_n)_{n\ge0}\subset X,\ x_0=x,\\
 Tx_{n+1}=x_n,\quad\|x_n\|\le c^n\|x\|\quad(n\ge0)
 \end{array}\right\},
\]
and
\[
 \qn(T)=\{x\in X:\lim_{n\to\infty}\|T^nx\|^{1/n}=0\}.
\]
The following properties are well known (see \cite[Proposition~1.4]{Mbekhta}):
\begin{equation}\label{eq:corefacts}
 T \cc(T)=\cc(T),\qquad \cc(T)\subseteq \Ran^\infty(T),\qquad T^{-1}(\qn(T))=\qn(T).
\end{equation}
If $Q$ is quasinilpotent, then $\cc(Q)=\{0\}$ and $\qn(Q)=X$. Moreover, if $S$ is semi-regular, then
\[
 \cc(S)=\Ran^\infty(S)\text{ is closed},\qquad \qn(S)\subseteq \cc(S)\subseteq \Ran(S).
\]

In \cite[p.~261]{Mbekhta}, Mbekhta asked whether the closedness of $\Ran(T)+\qn(T)$ and $\cc(T)\cap \Ker(T)$ implies that $T$ is pseudo-Fredholm on a Hilbert space. We give a negative answer to this conjecture. More precisely, we show that a bilateral weighted shift already considered by Mbekhta in the same paper \cite[pp.~274--275]{Mbekhta} is injective and satisfies
\[
 \Ran(T)+\qn(T)=H,\qquad \cc(T)\cap \Ker(T)=\{0\},
 \qquad T\text{ is not pseudo-Fredholm}.
\]
In \cite[p.~202]{AZO}, Aznay, Ouahab and Zariouh proposed  a characterization of pseudo B-Fredholm operators by the finite dimensions of $\cc(T)\cap \Ker(T)$ and $X/(\Ran(T)+\qn(T))$. The same example shows that the sufficiency assertion of this conjecture is false, since both dimensions are zero.

We then investigate the role of these  subspaces in the characterization of pseudo-Fredholm operators. We first prove that $T\in\mathcal B(X)$ is pseudo-Fredholm whenever $\cc(T)$ and $\cc(T)+\qn(T)$ are closed and both $\Ker(T)\cap \cc(T)$ and $\Ran(T)+\qn(T)$ are complemented in $X$. Here a subspace is complemented if it is the range of a bounded projection. In particular, complemented subspaces are closed. Since every closed subspace of a Hilbert space is complemented, we obtain that, when $H$ is a complex Hilbert space, $T\in\mathcal B(H)$ is pseudo-Fredholm if and only if
\[
 \cc(T),\qquad \cc(T)+\qn(T),\qquad \Ran(T)+\qn(T)
\]
are closed. We also show that these three conditions are independent and that closedness alone is not sufficient on arbitrary Banach spaces.

Note that a positive answer to Mbekhta's conjecture was claimed in \cite{Amouch}, but the corresponding article was retracted; see \cite{Retraction}. For results on Kato-type decompositions and semi-B-Fredholm operators, see \cite{Labrousse1980,Muller2001,BerkaniSarih2001,AznayKato2023}. Whenever closed subspaces form a direct sum equal to the ambient Banach space, this decomposition is topological; it need not be orthogonal.
We allow zero summands and use the convention that an operator on the zero
space is nilpotent and quasinilpotent.
For $A\in\B(H)$, $A'$ denotes the Hilbert adjoint.

We use the following criterion from 
\cite[Lemma~2.2]{AznayChabbabi2026}: for any $A\in\B(X)$,
\[
 x\in \qn(A)\quad\Longleftrightarrow\quad
 \sup_{r\ge0}k^r\|A^rx\|<\infty\quad\text{for every integer }k\ge1.
\]
 The paper is organized as follows. In Section~2, we give a negative answer to Mbekhta's conjecture using a bilateral weighted shift. In Section~3, we establish a sufficient condition for pseudo-Fredholmness on Banach spaces in terms of $\cc(T)$, $\qn(T)$, $\Ran(T)$ and $\Ker(T)$, and obtain a characterization on Hilbert spaces. We also give some spectral counterexamples. In Section~4, we characterize pseudo-B-Fredholm and pseudo-B-Weyl operators. In Section~5, we show that pseudo-Fredholmness on general Banach spaces cannot be characterized solely in terms of the subspaces $\cc(T)$, $\qn(T)$, $\Ran(T^n)$ and $\Ker(T^n)$, $n\geq 1$, through the closedness or complementability of their sums and intersections. Finally, in Section~6, we prove that the three closedness conditions are independent and that none of the four assumptions in the Banach-space result can be omitted.

\section{On Mbekhta's conjecture and some related questions on pseudo-Fredholm operators}\label{sec:mbekhta_questions}
Let $H$ be a complex Hilbert space. In this section, we give a negative answer to Mbekhta's
conjecture on pseudo-Fredholm operators. We consider the bilateral weighted shift given
by Mbekhta in \cite[pp.~274--275]{Mbekhta}. This operator is not pseudo-Fredholm,
as proved in the same paper. We show that it satisfies the conditions of Mbekhta's
conjecture and, consequently, those of the conjecture stated in our earlier paper
\cite{AZO}.

Although some of the calculations below are not needed to obtain these counterexamples, they will be useful in the sequel.

Let $(e_j)_{j\in\mathbb Z}$ be the standard orthonormal basis of $\ell^2(\mathbb Z)$ and define
\begin{equation}\label{eq:bilshift}
 Be_j=a_je_{j+1},\qquad
 a_j=\begin{cases}1,&j\le0,\\2^{-j},&j\ge1.\end{cases}
\end{equation}
The weights satisfy $0<a_j\le1$, so for every $x=\sum_jx_je_j$,
\[
 \|Bx\|^2=\sum_j|a_jx_j|^2\le\sum_j|x_j|^2=\|x\|^2.
\]
Thus $B$ is a contraction. Since all weights are nonzero, $Bx=0$ forces every $x_j=0$, and
\begin{equation}\label{eq:Binjective}\Ker(B)=\{0\}.
\end{equation}
Define the closed coordinate subspaces
\[
 H_-=\overline{\spn}\{e_j:j\le0\},\qquad
 H_+=\overline{\spn}\{e_j:j\ge1\}.
\]
Then $\ell^2(\mathbb Z)=H_-\oplus H_+$ orthogonally.

\begin{proposition}\label{prop:bil}
The operator $B$ in \eqref{eq:bilshift} satisfies
\begin{equation}\label{eq:bilidentities}
 \Ran(B)+\qn(B)=\ell^2(\mathbb Z),\qquad
 \cc(B)+\qn(B)=\ell^2(\mathbb Z),\qquad
 \cc(B)\cap\Ker(B)=\{0\}.
\end{equation}
The operator is not pseudo-Fredholm by Mbekhta's original example \cite[pp.~274--275]{Mbekhta}.
\end{proposition}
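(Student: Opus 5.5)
The plan is to show that $H_-\subseteq\cc(B)$ and $H_+\subseteq\qn(B)$. Granting these two inclusions, we get
\[
\ell^2(\mathbb Z)=H_-+H_+\subseteq\cc(B)+\qn(B)\subseteq\Ran(B)+\qn(B),
\]
where the last inclusion uses $\cc(B)\subseteq\Ran^\infty(B)\subseteq\Ran(B)$ from \eqref{eq:corefacts}. This yields the first two identities in \eqref{eq:bilidentities}. The third identity is immediate from \eqref{eq:Binjective}, since $\Ker(B)=\{0\}$. The failure of pseudo-Fredholmness is taken directly from \cite[pp.~274--275]{Mbekhta}, as the statement says.

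\emph{Step 1: $H_+\subseteq\qn(B)$.} For $x=\sum_{j\ge1}x_je_j$ we have
\[
B^rx=\sum_{j\ge1}x_j\,a_ja_{j+1}\cdots a_{j+r-1}\,e_{j+r}.
\]
For $j\ge1$ each product satisfies
\[
a_j\cdots a_{j+r-1}=2^{-(jr+r(r-1)/2)}\le 2^{-r(r+1)/2},
\]
so $\|B^rx\|\le 2^{-r(r+1)/2}\|x\|$. Hence $\|B^rx\|^{1/r}\to0$, and $x\in\qn(B)$. Alternatively, the criterion from \cite[Lemma~2.2]{AznayChabbabi2026} applies directly: $k^r2^{-r(r+1)/2}$ is bounded in $r$ for every $k$.

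\emph{Step 2: $H_-\subseteq\cc(B)$.} Given $x=\sum_{j\le0}x_je_j$, I will set
\[
x_n=\sum_{j\le0}x_je_{j-n}\qquad(n\ge0),
\]
which is the backward unilateral shift of $x$ applied $n$ times. Every index $j-n-1$ appearing here is at most $0$, so the corresponding weight is $1$, and therefore $Bx_{n+1}=x_n$. Moreover $\|x_n\|=\|x\|\le c^n\|x\|$ with $c=1$, and $x_0=x$. This sequence witnesses $x\in\cc(B)$.

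The only delicate point I expect is bookkeeping: checking that the weight indices in Step 2 stay in the region $j\le0$, and that in Step 1 the products start at index $\ge1$. Both are direct from the definition \eqref{eq:bilshift}. Since $\cc(B)$ is a subspace, $H_-\subseteq\cc(B)$ follows, and the sum decomposition completes the proof.
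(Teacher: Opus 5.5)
Your proposal is correct and follows essentially the same route as the paper. You prove $H_+\subseteq\qn(B)$ via the product bound $2^{-r(r+1)/2}$ together with orthogonality, and $H_-\subseteq\cc(B)$ via the norm-preserving backward chain through weights equal to $1$; you then conclude with $\cc(B)\subseteq\Ran(B)$, $\Ker(B)=\{0\}$, and Mbekhta's original argument for the failure of pseudo-Fredholmness.
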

\begin{proof}
We first show that $H_+\subseteq\qn(B)$.
For $j\ge1$, repeated application of $B$ gives
\[
 B^re_j=a_ja_{j+1}\cdots a_{j+r-1}e_{j+r}
 =2^{-\sum_{s=0}^{r-1}(j+s)}e_{j+r}
 =2^{-rj-r(r-1)/2}e_{j+r}.
\]
Because $j\ge1$,
\[
 |2^{-rj-r(r-1)/2}|\le2^{-r(r+1)/2}.
\]
For an arbitrary $x=\sum_{j\ge1}x_je_j\in H_+$, the vectors $B^re_j$ are orthogonal as $j$ varies. The Pythagorean theorem yields
\[
 \|B^rx\|^2=\sum_{j\ge1}|x_j|^2\,2^{-2rj-r(r-1)}
 \le2^{-r(r+1)}\|x\|^2.
\]
Consequently $\|B^rx\|^{1/r}\le2^{-(r+1)/2}\|x\|^{1/r}\to0$ for $x\ne0$; the assertion for $x=0$ is immediate. Hence
\begin{equation}\label{eq:posH0}H_+\subseteq\qn(B).\end{equation}

We next prove that $H_-\subseteq\cc(B)$.
For $x=\sum_{j\le0}x_je_j\in H_-$ define
\[
 x^{(r)}=\sum_{j\le0}x_je_{j-r}\qquad(r\ge0).
\]
The sums converge in $\ell^2$, and changing indices preserves the norm:
$\|x^{(r)}\|=\|x\|$. Since the index $j-r-1$ is nonpositive for every $j\le0$ and $r\ge0$, its weight is $1$. Thus
\[
 Bx^{(r+1)}=\sum_{j\le0}x_jBe_{j-r-1}
 =\sum_{j\le0}x_je_{j-r}=x^{(r)}.
\]
This is a backward chain with constant norm, so $x\in\cc(B)$. Therefore
\begin{equation}\label{eq:negK}H_-\subseteq\cc(B)\subseteq\Ran(B).\end{equation}

We now prove the first two equalities in \eqref{eq:bilidentities}.
For any $z\in\ell^2(\mathbb Z)$ write uniquely $z=z_-+z_+$ with $z_-\in H_-$ and $z_+\in H_+$. Equations \eqref{eq:posH0}--\eqref{eq:negK} imply
$z_-\in\Ran(B)\cap\cc(B)$ and $z_+\in\qn(B)$. Therefore $z$ belongs to both sums in \eqref{eq:bilidentities}. The third equality follows from \eqref{eq:Binjective}.

Finally, Mbekhta proved that this shift is not pseudo-Fredholm in
\cite[pp.~274--275]{Mbekhta}.
\end{proof}

It follows that the conjecture of Mbekhta \cite[p.~261]{Mbekhta} is false.
Moreover, $\dim(\cc(B)\cap\Ker(B))=0$ and
$\dim\big(\ell^2(\mathbb Z)/(\Ran(B)+\qn(B))\big)=0$.
Hence the sufficiency assertion of the conjecture in \cite{AZO} is also false.

Only one coefficient condition is needed for the quasinilpotent part of this shift.

\begin{proposition}\label{prop:bilcores}
For $x=\sum_{j\in\mathbb Z}x_je_j$, one has
\[
 x\in \qn(B)\quad\Longleftrightarrow\quad
 |x_{-N}|^{1/N}\longrightarrow0\qquad(N\to\infty).
\]
Moreover, both $\qn(B)$ and $\cc(B)$ are dense proper subspaces of $\ell^2(\mathbb Z)$ and hence are not closed.
\end{proposition}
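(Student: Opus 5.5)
The plan is to compute $\|B^rx\|$ coordinatewise and then apply the criterion from \cite[Lemma~2.2]{AznayChabbabi2026}: $x\in\qn(B)$ if and only if $\sup_r k^r\|B^rx\|<\infty$ for every integer $k\ge1$.

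\textbf{Formula for $B^r$.} Since $B^re_j=(a_ja_{j+1}\cdots a_{j+r-1})e_{j+r}$ and the vectors $B^re_j$ are orthogonal for distinct $j$, we get $\|B^rx\|^2=\sum_j|x_j|^2w(j,r)^2$, where $w(j,r)$ is that product. For $j=-N\le0$ all weights are $1$ while the index stays $\le0$. This gives $w(-N,r)=1$ for $r\le N+1$, and $w(-N,r)=\prod_{s=1}^{r-N-1}2^{-s}=2^{-(r-N-1)(r-N)/2}$ for $r>N$.

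\textbf{Necessity.} If $x\in\qn(B)$, take $r=N$. Then $\|B^Nx\|\ge|x_{-N}|w(-N,N)=|x_{-N}|$. The criterion gives $k^N|x_{-N}|\le C_k$ for all $N$, so $\limsup_N|x_{-N}|^{1/N}\le 1/k$. Letting $k\to\infty$ yields $|x_{-N}|^{1/N}\to0$.

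\textbf{Sufficiency.} Write $x=x_-+x_+$ with $x_\pm\in H_\pm$. Since $x_+\in\qn(B)$ by \eqref{eq:posH0} and $\qn(B)$ is a linear subspace, it suffices to treat $x_-$. Fix $k$. The hypothesis gives $|x_{-N}|\le C(2k)^{-N}$ for all $N\ge0$. Each term $k^r|x_{-N}|w(-N,r)$ is then bounded by $C2^{-N}$ times $k^{m}2^{-m(m-1)/2}$, where $m=\max(r-N,0)$. The second factor is bounded uniformly in $m$. Summing squares over $N$ gives $\sup_r k^r\|B^rx_-\|<\infty$, so the criterion yields $x_-\in\qn(B)$.

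\textbf{Density and properness.} Every finitely supported vector lies in both subspaces. For $\qn(B)$ this follows from the characterization just proved. For $\cc(B)$, use $H_-\subseteq\cc(B)$ from \eqref{eq:negK} together with $B\cc(B)=\cc(B)$ from \eqref{eq:corefacts}. This gives $e_j=B^j e_0\in\cc(B)$ for $j\ge1$, since all the relevant weights $a_0,\dots,a_{j-1}$ are nonzero constants and each $B^je_0$ is a nonzero multiple of $e_j$. Hence both subspaces are dense.

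$\qn(B)$ is proper: the vector with $x_{-N}=2^{-N}$ for $N\ge0$ and all other coordinates zero violates the coefficient condition.

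$\cc(B)$ is proper: take $x=\sum_{j\ge1}j^{-1}e_j$. Any preimage $y$ with $By=x$ must have $y_{j-1}=x_j/a_{j-1}=2^{j-1}/j$ for $j\ge2$, which is unbounded. So $x\notin\Ran(B)\supseteq\cc(B)$, using $\cc(B)\subseteq\Ran^\infty(B)$.

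A dense proper subspace cannot be closed, which finishes the proof.

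The main obstacle is the sufficiency estimate. The choice $|x_{-N}|\le C(2k)^{-N}$ is what makes the growth $k^{r-N}$ be absorbed by the Gaussian-type decay $2^{-(r-N)^2/2}$ uniformly in $N$, while leaving a summable factor $2^{-N}$.
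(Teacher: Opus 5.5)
Your proof is correct. It differs from the paper's mainly in the sufficiency step.

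The paper works directly from the definition of $\qn(B)$. For each $r$ it splits $x_-$ at $N=\lfloor r/2\rfloor$ into two pieces:
\begin{itemize}
\item a head $u_r$, the coordinates $x_{-m}$ with $0\le m<N$, whose orbit picks up the Gaussian factor $2^{-(r-N)(r-N+1)/2}$;
\item a tail $v_r$, bounded using $\|B\|\le1$ by the $\ell^2$-tail $S_N$, for which it shows separately that $S_N^{1/N}\to0$.
\end{itemize}
It then takes $r$th roots of both terms.

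You instead fix $k$ and convert the hypothesis into the bound $|x_{-N}|\le C(2k)^{-N}$ for all $N\ge0$. You then bound every coordinate's contribution to $k^r\|B^rx_-\|$ by $C2^{-N}\sup_{m\ge0}k^m2^{-m(m-1)/2}$, which is summable in $N$ after squaring. This avoids the $r$-dependent split and the tail bookkeeping, and it shows more transparently why exponential growth is absorbed by the Gaussian decay uniformly in $N$.

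You invoke the criterion quoted in the introduction, but you do not need it. Your estimate already gives $\|B^rx_-\|\le C'k^{-r}$, hence $\limsup_r\|B^rx_-\|^{1/r}\le1/k$ for every $k$. Likewise, necessity needs only $\|B^Nx\|^{1/N}\to0$ together with $|x_{-N}|\le\|B^Nx\|$, which is exactly the paper's argument.

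The remaining parts match the paper up to cosmetic choices:
\begin{itemize}
\item You get $e_j\in\cc(B)$ for $j\ge1$ from $H_-\subseteq\cc(B)$ and $B\cc(B)=\cc(B)$, rather than by exhibiting bounded backward chains.
\item Your properness witnesses $\sum_{N\ge0}2^{-N}e_{-N}$ and $\sum_{j\ge1}j^{-1}e_j$ work just as the paper's $\sum_{m\ge1}m^{-1}e_{-m}$ and $\sum_{m\ge1}2^{-m}e_m$ do.
\end{itemize}
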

\begin{proof}
If $x\in \qn(B)$, the first $N$ weights starting at index $-N$ are all equal to $1$. Orthogonality gives
$|x_{-N}|\le\|B^Nx\|$, so $|x_{-N}|^{1/N}\to0$.

Conversely, assume $|x_{-N}|^{1/N}\to0$. Put
$S_N=\bigl(\sum_{m\ge N}|x_{-m}|^2\bigr)^{1/2}$.
For any $0<\varepsilon<1$ we have $|x_{-m}|\le\varepsilon^m$ for all sufficiently large $m$, whence
$S_N\le\varepsilon^N(1-\varepsilon^2)^{-1/2}$ eventually. Thus $S_N^{1/N}\to0$. Write $x=x_-+x_+$ using $H_-\oplus H_+$, and for $r\ge2$ set $N=\lfloor r/2\rfloor$ and
\[
 x_-=u_r+v_r,\qquad
 u_r=\sum_{0\le m<N}x_{-m}e_{-m},\qquad
 v_r=\sum_{m\ge N}x_{-m}e_{-m}.
\]
Since $\|B\|\le1$, $\|B^rv_r\|\le S_N$. The orbit of each $e_{-m}$ in $u_r$ meets at least the positive weights
$2^{-1},\ldots,2^{-(r-N)}$, so orthogonality yields
\[
 \|B^ru_r\|\le2^{-(r-N)(r-N+1)/2}\|x\|.
\]
Both terms have $r$th roots tending to zero, since $N/r\to1/2$. Hence $x_-\in \qn(B)$. We already know $H_+\subseteq \qn(B)$, so $x\in \qn(B)$.

Every finitely supported vector satisfies the coefficient criterion, whereas $\sum_{m\ge1}m^{-1}e_{-m}$ does not; thus $\qn(B)$ is dense and proper. Every basis vector admits a bounded backward $B$-chain (only finitely many weights differ from $1$ on its backward orbit), so $\cc(B)$ is also dense. Finally $y=\sum_{m\ge1}2^{-m}e_m\notin \Ran(B)$: its preimage would have infinitely many coordinates equal to $1/2$. Since $\cc(B)\subseteq \Ran(B)$, the core is proper.
\end{proof}

\section{A sufficient condition on Banach spaces and the Hilbert-space characterization}\label{sec:characterization}
We first give a sufficient condition on Banach spaces in terms of the analytic core,
quasinilpotent part, kernel and range. The characterization on Hilbert spaces will follow as a corollary.

\begin{theorem}\label{th:suff}
Let $T\in\B(X)$. Assume that
\begin{align*}
\textup{(C1)}\quad &\cc(T)\text{ is closed};\\
\textup{(C2)}\quad &\cc(T)+\qn(T)\text{ is closed};\\
\textup{(C3)}\quad &\Ker(T)\cap\cc(T)\text{ is complemented in }X;\\
\textup{(C4)}\quad &\Ran(T)+\qn(T)\text{ is complemented in }X.
\end{align*}
Then $T$ is pseudo-Fredholm.
\end{theorem}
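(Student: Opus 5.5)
We will build a generalized Kato decomposition by hand. The candidate for the semi-regular part will contain $\cc(T)$, and the candidate for the quasinilpotent part will be a closed $T$-invariant complement carved out of $\qn(T)$ together with the complemented pieces. Concretely, $M$ will be assembled from $\cc(T)$ and $N$ will satisfy $\qn(T)\cap \cc(T)$-type relations, with the two complementation hypotheses supplying the missing topological complements.

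\emph{Step 1 (structure of $\cc(T)$).} By (C1) and \eqref{eq:corefacts}, $K:=\cc(T)$ is a closed subspace with $TK=K$. So $T_K:=T|_K$ is a surjective operator on a Banach space, hence has closed range. Its kernel $\Ker(T)\cap K$ is contained in $K=\Ran^\infty(T_K)$, so $T_K$ is semi-regular.

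\emph{Step 2 (the ambient splitting).} By (C3), choose a closed complement $Z$ of $\Ker(T)\cap K$ in $X$. By (C4), choose a bounded projection $P$ onto $W:=\Ran(T)+\qn(T)$. Using (C2), $K+\qn(T)$ is closed. We aim to prove $\qn(T)\cap K$ is small enough to define the quasinilpotent piece. The natural guess is $N:=\overline{\qn(T)}\cap Z'$ for a suitable closed $T$-invariant subspace. A cleaner route is to show that $\qn(T)$ is itself closed modulo $K$: the quotient $Y=(K+\qn(T))/K$ is a Banach space (by (C2)), and $T$ induces $\tilde T$ on it. Every class there is quasinilpotent for $\tilde T$. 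Baire category applied to the sets $\{y:\sup_r k^r\|\tilde T^ry\|\le n\}$ (the criterion of \cite[Lemma~2.2]{AznayChabbabi2026}) then gives $\tilde T$ quasinilpotent on $Y$. We will then show, using (C4), that $X/(K+\qn(T))$ carries an operator that is both surjective-like and bounded below. The first property comes from $W$ complemented and $T^{-1}(\qn(T))=\qn(T)$, and the second from the kernel control via $Z$. So it is semi-regular with trivial quasinilpotent part, and in fact invertible on the quotient because its analytic core must be everything.

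\emph{Step 3 (lifting to a direct sum).} From the short exact pieces $0\to K\to K+\qn(T)\to Y\to 0$ and the decomposition of $X$ modulo $K+\qn(T)$, we will produce $(M,N)\in Red(T)$. Here $M\supseteq K$ with $T|_M$ semi-regular, and $N$ is a closed invariant subspace inside $\qn(T)$ mapping isomorphically onto $Y$ (or a complement thereof). The complement $Z$ from (C3) will be used to split off $\Ker(T)\cap K$ compatibly, and $P$ to split $W$. The idea is to take $N=\{x\in \qn(T)\cap \Ran(I-\Pi)\}$ for a projection $\Pi$ onto $K$ adapted via these two projections. We will then check invariance through $T^{-1}(\qn(T))=\qn(T)$ and $TK=K$.

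\textbf{Main obstacle.} The hard step is Step 3: producing a $T$-invariant closed complement. Complementability of $K$ itself is not assumed, only of $\Ker(T)\cap K$ and of $\Ran(T)+\qn(T)$. Our first attempt will be a Riesz-type argument. If Step 2 shows $0$ is an isolated point of the spectrum of the induced quotient operators, a holomorphic functional-calculus idempotent would give the splitting. Failing that, we will construct $N$ explicitly as $\qn(T)\cap T^{-n}(\ldots)$ and verify closedness with the Baire estimate from Step 2.
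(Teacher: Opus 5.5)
\textbf{There is a genuine gap.} Your Step~1, and the quasinilpotence of $\tilde T$ on $(\cc(T)+\qn(T))/\cc(T)$, are correct and agree with the paper. The argument fails after that.

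\emph{Step~2 contains a false claim.} Put $L=\cc(T)+\qn(T)$ and let $D$ be the operator induced on $X/L$. It is injective because $T^{-1}(L)=L$. It is bounded below because its range $(\Ran(T)+\qn(T))/L$ is closed; (C3) plays no role here. But $D$ is \emph{not} surjective unless $\Ran(T)+\qn(T)=X$. The unilateral shift on $\ell^2$ is semi-regular and satisfies (C1)--(C4), yet $\cc(T)=\qn(T)=\{0\}$. So $D=T$, whose analytic core is $\{0\}$ and whose spectrum is the closed unit disc. Hence the quotient operator is not invertible, and $0$ is in general not an isolated point of the spectrum of the quotient operators. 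The Riesz idempotent you fall back on in your ``main obstacle'' paragraph therefore does not exist.

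\emph{Step~3 has no working construction.} It uses a projection $\Pi$ onto $\cc(T)$, which, as you note yourself, is not assumed to exist. Even if it existed, nothing makes it commute with $T$, so $\qn(T)\cap\Ran(I-\Pi)$ need be neither closed nor invariant. Producing a closed $T$-invariant complement of $\cc(T)$ inside $L$ is the whole content of the theorem. Theorem~\ref{th:obstruction} shows this complement cannot be read off from $\cc(T)$, $\qn(T)$, kernels and ranges alone. Nor is the semi-regular summand assembled from $\cc(T)$: for the unilateral shift, $\cc(T)=\{0\}$ while that summand is all of $X$.

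\emph{The missing idea} is to work with $T-\lambda I$ on a punctured disc, not with quotient operators at $\lambda=0$.
\begin{enumerate}
\item (C3) gives a bounded right inverse $V$ of $T|_{\cc(T)}$ with values in a fixed complement $E$ of $A=\Ker(T)\cap\cc(T)$.
\item For small $\lambda\ne0$, $D-\lambda I$ is injective and $\tilde T-\lambda I$ is invertible. Together these force $\Ker(T-\lambda I)=(I-\lambda V)^{-1}A\subseteq\cc(T)$, so all these kernels share the complement $E$.
\item (C4), together with the boundedness below of $D$, shows that $\Ran(T-\lambda I)\supseteq L$ is the graph of a small holomorphic operator from $M=\Ran(T)+\qn(T)$ into a fixed complement $F$ of $M$.
\item Then $G(\lambda)=\bigl(P_M(T-\lambda I)|_E\bigr)^{-1}P_M$ is a holomorphic generalized inverse of $T-\lambda I$ on $0<|\lambda|<\varepsilon$, and it satisfies $G(\lambda)-G(\mu)=(\lambda-\mu)G(\lambda)G(\mu)$.
\item $P=-\frac{1}{2\pi i}\oint_{|\lambda|=\rho}G(\lambda)\,d\lambda$ is a projection commuting with $T$, with $T|_{P(X)}$ quasinilpotent and $T|_{(I-P)X}$ semi-regular.
\end{enumerate}
The contour integral has to be taken of this generalized inverse, not of a resolvent of a quotient operator.
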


\begin{proof}
\par\medskip\noindent\textit{Step 1.} We begin with the restriction of $T$ to $K$ and the operators induced on the quotient spaces.\par\smallskip
Put
\[
K=\cc(T),\qquad L=\cc(T)+\qn(T),\qquad
M=\Ran(T)+\qn(T),\qquad A=\Ker(T)\cap K.
\]
Then $K\subseteq L\subseteq M$ are closed and $A,M$ are complemented in $X$. From \eqref{eq:corefacts},
\begin{equation}\label{eq:preimage}
TK=K,\qquad T^{-1}(L)=L.
\end{equation}

The restriction $S=T|_K$ is surjective with kernel $A$. Let $P_A$ be a bounded projection from $X$ onto $A$. Since $K=A\oplus(K\cap\Ker(P_A))$, the restriction of $S$ to $K\cap\Ker(P_A)$ is a bounded bijection onto $K$. Hence there exists $V\in\B(K)$ such that
\begin{equation}\label{eq:right}
SV=I_K,\qquad V(K)\subseteq K\cap\Ker(P_A).
\end{equation}
By \eqref{eq:right}, $(S-\lambda I)V=I-\lambda V$ and $S-\lambda I$ is surjective for $|\lambda|$ small.

The operator $Q$ induced by $T$ on $L/K$ is
quasinilpotent. Indeed, for every $z\in L/K$,
we may write $z=h+K$ with $h\in \qn(T)$. Hence
\[
\|Q^nz\|^{1/n}\leq\|T^nh\|^{1/n}
\longrightarrow 0.
\]
Thus $\qn(Q)=L/K$. For each integer $k\ge1$, the family $(k^nQ^n)_{n\ge0}$ is pointwise bounded. By the uniform boundedness principle, $\sup_{n\ge0}k^n\|Q^n\|<\infty$. Therefore $r(Q)\le1/k$ for every $k\ge1$, and $Q$ is quasinilpotent.
In particular, $Q-\lambda I$ is invertible for $\lambda\ne0$.

Let $D$ be the operator induced by $T$ on the Banach space $Y=X/L$. By \eqref{eq:preimage}, $D$ is injective and
\[
\Ran(D)=M/L.
\]

Since $M$ is closed, $D$ is bounded below.
Since $M$ is complemented in $X$ and contains
$L$, $M/L$ is complemented in $Y$.
Fix a closed complement $F$ of $M$ in $X$ and put
\[
\widehat F=(F+L)/L.
\]
Since $L\subseteq M$ and $X=M\oplus F$, the
subspace $\widehat F$ is closed in $Y$, and
\[
Y=(M/L)\oplus\widehat F.
\]

\par\medskip\noindent\textit{Step 2.} We describe the range of $D-\lambda I_Y$ for small $\lambda$.\par\smallskip
Let $\pi:Y\to M/L$ be the bounded projection
onto $M/L$ along $\widehat F$. Since $\pi D=D$,
we have
\[
D_\lambda:=\pi(D-\lambda I_Y)=D-\lambda\pi.
\]
As $D:Y\to M/L$ is an isomorphism, we can write
\[
D_\lambda=D(I_Y-\lambda D^{-1}\pi).
\]
By the Neumann series,
\[
D_\lambda^{-1}
=\sum_{n=0}^{\infty}
\lambda^n(D^{-1}\pi)^nD^{-1}
\]
for sufficiently small $|\lambda|$.
Hence $D_\lambda:Y\to M/L$ is an isomorphism
and its inverse depends holomorphically
on $\lambda$.

Let $\pi_{\widehat F}:Y\to\widehat F$ be the
projection onto $\widehat F$ along $M/L$.
Since $\pi_{\widehat F}D=0$, we obtain
\[
(D-\lambda I_Y)D_\lambda^{-1}u
=u-\lambda\pi_{\widehat F}D_\lambda^{-1}u,
\qquad u\in M/L.
\]
Moreover, for every $y\in Y$,
\[
\|y\|
\leq \|D_\lambda^{-1}\|\|D_\lambda y\|
\leq \|D_\lambda^{-1}\|\|\pi\|
\|(D-\lambda I_Y)y\|.
\]
Thus $D-\lambda I_Y$ is bounded below, and
\[
\Ran(D-\lambda I_Y)
=\left\{
u-\lambda\pi_{\widehat F}D_\lambda^{-1}u:
u\in M/L
\right\}.
\]
Hence its range is the graph of the bounded
operator
\[
\Phi(\lambda):=-\lambda\pi_{\widehat F}D_\lambda^{-1}
:M/L\longrightarrow\widehat F,
\]
which depends holomorphically on $\lambda$
and vanishes at $\lambda=0$.

\par\medskip\noindent\textit{Step 3.} We find complements of the kernel and range of $T-\lambda I$ which do not depend on $\lambda$.\par\smallskip
We next obtain fixed complements of the kernel and range
of $T-\lambda I$. Set $E=\Ker(P_A)$, so that
$X=A\oplus E$. For $0<|\lambda|$ sufficiently
small, the injectivity of $D-\lambda I_Y$ implies
that $\Ker(T-\lambda I)\subseteq L$. Since
$Q-\lambda I$ is invertible on $L/K$, we obtain
$\Ker(T-\lambda I)\subseteq K$. Hence
\[
\Ker(T-\lambda I)
=\Ker(S-\lambda I)
=(I_K-\lambda V)^{-1}A.
\]
Since $V(K)\subseteq E$, for every $a\in A$,
\[
(I_K-\lambda V)^{-1}a
=a+\lambda V(I_K-\lambda V)^{-1}a.
\]
Thus $\Ker(T-\lambda I)$ is the graph, relative
to $X=A\oplus E$, of the bounded operator
\[
\Theta(\lambda)
=\lambda V(I_K-\lambda V)^{-1}|_A
:A\longrightarrow E.
\]
Consequently,
\begin{equation}\label{eq:kercompl}
X=\Ker(T-\lambda I)\oplus E.
\end{equation}
Moreover, the projection onto $\Ker(T-\lambda I)$
along $E$ is given by
\[
P_{\mathrm{ker}}(\lambda)=(I_K-\lambda V)^{-1}P_A.
\]
By the Neumann series, $P_{\mathrm{ker}}(\lambda)$ extends
holomorphically to $\lambda=0$, with
$P_{\mathrm{ker}}(0)=P_A$.

The surjectivity of $S-\lambda I$ and the
invertibility of $Q-\lambda I$ imply
\[
L\subseteq\Ran(T-\lambda I).
\]
Indeed, for every $x\in L$, there exists
$y\in L$ such that
\[
x-(T-\lambda I)y\in K.
\]
Since $S-\lambda I$ is surjective on $K$,
there exists $k\in K$ such that
\[
x-(T-\lambda I)y=(T-\lambda I)k.
\]
Hence $x=(T-\lambda I)(y+k)$, as required.

Since $L\subseteq\Ran(T-\lambda I)$, we have
\[
\Ran(T-\lambda I)
=q^{-1}(\Ran(D-\lambda I_Y)),
\]
where $q:X\to X/L$ is the quotient map.
Let $J:F\to\widehat F$ be defined by
$Jf=f+L$. Since $X=M\oplus F$ and
$L\subseteq M$, the map $J$ is a bounded
isomorphism.

Recall that $\Ran(D-\lambda I_Y)$ is
the graph of the holomorphic operator
$\Phi(\lambda):M/L\to\widehat F$.
Put
\[
\Psi(\lambda)
=J^{-1}\Phi(\lambda)q|_M
:M\longrightarrow F.
\]
Then
\[
\Ran(T-\lambda I)
=\{m+\Psi(\lambda)m:m\in M\}.
\]
Thus $\Ran(T-\lambda I)$ is the graph,
relative to $X=M\oplus F$, of the bounded
operator $\Psi(\lambda)$, which depends
holomorphically on $\lambda$ and satisfies
$\Psi(0)=0$. Consequently,
\begin{equation}\label{eq:rancompl}
X=\Ran(T-\lambda I)\oplus F.
\end{equation}
Let $P_M$ be the projection onto $M$ along $F$.
The projection onto $\Ran(T-\lambda I)$ along
$F$ is given by
\[
P_{\mathrm{ran}}(\lambda)=(I_M+\Psi(\lambda))P_M=P_M+\Psi(\lambda)P_M.
\]
Since $\Psi(\lambda)$ is holomorphic near zero
and $\Psi(0)=0$, the projection $P_{\mathrm{ran}}(\lambda)$
extends holomorphically to $\lambda=0$, with
$P_{\mathrm{ran}}(0)=P_M$. All assertions hold for
$0<|\lambda|<\varepsilon$, for some
$\varepsilon>0$.

\par\medskip\noindent\textit{Step 4.} We construct a generalized inverse of $T-\lambda I$.\par\smallskip
By \eqref{eq:kercompl}, the restriction
\[
(T-\lambda I)|_E:E\longrightarrow
\Ran(T-\lambda I)
\]
is an isomorphism. Since $\Ran(T-\lambda I)$
is the graph of $\Psi(\lambda):M\to F$,
the restriction of $P_M$ to $\Ran(T-\lambda I)$
is also an isomorphism onto $M$. Hence
\[
B(\lambda)=P_M(T-\lambda I)|_E:
E\longrightarrow M
\]
is a bounded isomorphism. Define
\[
G(\lambda)=B(\lambda)^{-1}P_M
\in\B(X),\qquad 0<|\lambda|<\varepsilon.
\]
Since $B(\lambda)$ is holomorphic and invertible for $0<|\lambda|<\varepsilon$, so is $G(\lambda)$.
Since
\[
B(\mu)-B(\lambda)
=(\lambda-\mu)P_M|_E,
\]
we obtain 
\begin{equation}\label{eq:resid}
\begin{aligned}
G(\lambda)-G(\mu)
&=B(\lambda)^{-1}
(B(\mu)-B(\lambda))
B(\mu)^{-1}P_M\\
&=(\lambda-\mu)G(\lambda)G(\mu).
\end{aligned}
\end{equation}

Moreover, since $G(\lambda)=B(\lambda)^{-1}P_M$
and $\Ran(T-\lambda I)$ is the graph of
$\Psi(\lambda)$, we have
\[
(T-\lambda I)G(\lambda)
=P_M+\Psi(\lambda)P_M=P_{\mathrm{ran}}(\lambda).
\]
Thus $(T-\lambda I)G(\lambda)$ is the
projection onto $\Ran(T-\lambda I)$ along $F$.
Similarly, since $B(\lambda)^{-1}P_M(T-\lambda I)$
is the projection onto $E$ along
$\Ker(T-\lambda I)$, we obtain
\[
I-G(\lambda)(T-\lambda I)
=(I_K-\lambda V)^{-1}P_A.
\]
Hence both projections extend holomorphically
to $\lambda=0$, with respective values
$P_M$ and $P_A$.

\par\medskip\noindent\textit{Step 5.} We use $G(\lambda)$ to define a projection $P$.\par\smallskip
Choose $0<\rho<\varepsilon$ and set
\begin{equation}\label{eq:Riesz}
P=-\frac{1}{2\pi i}
\int_{|\lambda|=\rho}G(\lambda)\,d\lambda.
\end{equation}
We show that $P$ is a bounded projection commuting
with $T$. Fix $0<|\lambda_0|<\varepsilon$ and put
$C=G(\lambda_0)$. By \eqref{eq:resid},
\[
G(\lambda)-C
=(\lambda-\lambda_0)G(\lambda)C.
\]
Interchanging $\lambda$ and $\lambda_0$ gives
$G(\lambda)C=CG(\lambda)$. Consequently,
\[
[I-(\lambda-\lambda_0)C]^{-1}
=I+(\lambda-\lambda_0)G(\lambda),
\]
and hence
\[
G(\lambda)
=C[I-(\lambda-\lambda_0)C]^{-1}.
\]

For $\lambda\ne\lambda_0$, the operator
$I-(\lambda-\lambda_0)C$ is invertible
if and only if
\[
(\lambda-\lambda_0)^{-1}\notin\sigma(C).
\]
For $0<|\lambda|<\varepsilon$, this operator is
invertible. Since $\lambda\mapsto(\lambda-\lambda_0)^{-1}$
is biholomorphic near zero, a punctured neighborhood
of $c_0=-1/\lambda_0$ is disjoint from $\sigma(C)$.
Thus, if $c_0\in\sigma(C)$, it is isolated. Notice that
$c_0\ne0$ since $\lambda_0\ne0$.

Let $P_0$ be the Riesz projection of $C$
associated with $c_0$, and put $P_0=0$
if $c_0\notin\sigma(C)$. Since $G(\lambda)$
commutes with $C$,  $P_0$ commutes
with $G(\lambda)$.

Put $Z=(I-P_0)X$ and $C_1=C|_Z$.
Since $c_0\notin\sigma(C_1)$ and
$c_0=-1/\lambda_0$, the operator
$I+\lambda_0C_1$ is invertible. Moreover,
\[
\begin{aligned}
I-(\lambda-\lambda_0)C_1
&=I+\lambda_0C_1-\lambda C_1\\
&=(I+\lambda_0C_1)
\left[I-\lambda(I+\lambda_0C_1)^{-1}C_1\right].
\end{aligned}
\]
By the Neumann series, the operator on the
left is invertible for sufficiently small
$|\lambda|$, and its inverse depends
holomorphically on $\lambda$. Since
\[
G(\lambda)|_Z
=C_1[I-(\lambda-\lambda_0)C_1]^{-1},
\]
it follows that $(I-P_0)G(\lambda)$ extends
holomorphically to $\lambda=0$.
Consequently, by Cauchy's theorem,
\[
\int_{|\lambda|=\rho}
(I-P_0)G(\lambda)\,d\lambda=0.
\]

Suppose now that $P_0\ne0$ and put
$C_0=C|_{P_0X}$. Since
$\sigma(C_0)=\{c_0\}$ and $c_0\ne0$,
$C_0$ is invertible. Moreover,
\[
G(\lambda)|_{P_0X}
=(C_0^{-1}+\lambda_0 I-\lambda I)^{-1}.
\]
The operator
\[
A_0=C_0^{-1}+\lambda_0 I
\]
is quasinilpotent, since
\[
\sigma(A_0)
=\{c_0^{-1}+\lambda_0\}
=\{0\}.
\]
Thus, by the Riesz integral formula,
\[
-\frac{1}{2\pi i}
\int_{|\lambda|=\rho}
G(\lambda)|_{P_0X}\,d\lambda
=I_{P_0X}.
\]
Together with the preceding integral,
this shows that $P=P_0$. The same
conclusion holds when $P_0=0$, since
$G(\lambda)$ then extends holomorphically
to zero. Consequently,
\begin{equation}\label{eq:PG}
P^2=P,\qquad PG(\lambda)=G(\lambda)P,
\end{equation}
and $(I-P)G(\lambda)$ extends
holomorphically to $\lambda=0$.

\par\medskip\noindent\textit{Step 6.} We show that $P$ commutes with $T$.\par\smallskip
We next show that $P$ commutes with $T$.
Since
\[
TG(\lambda)-G(\lambda)T
=(T-\lambda I)G(\lambda)
-G(\lambda)(T-\lambda I),
\]
and both terms on the right extend
holomorphically to zero, so does their
difference. Hence, by Cauchy's theorem,
\[
\begin{aligned}
TP-PT
&=-\frac{1}{2\pi i}
\int_{|\lambda|=\rho}
\bigl(TG(\lambda)-G(\lambda)T\bigr)
\,d\lambda\\
&=0.
\end{aligned}
\]
Thus $P$ is a bounded projection
commuting with $T$.

\par\medskip\noindent\textit{Step 7.} We prove that $T|_{P(X)}$ is quasinilpotent.\par\smallskip
We now show that $T|_{P(X)}$ is
quasinilpotent. Put
\[
H(\lambda)=(T-\lambda I)G(\lambda).
\]
As proved above, $H(\lambda)$ extends
holomorphically to $\lambda=0$.
For every $n\ge1$, we have
\[
T^nG(\lambda)-\lambda^nG(\lambda)
=
\sum_{k=0}^{n-1}
\lambda^k T^{n-1-k}H(\lambda).
\]
Therefore,
$T^nG(\lambda)-\lambda^nG(\lambda)$
extends holomorphically to zero.
By Cauchy's theorem and
\eqref{eq:Riesz},
\[
T^nP
=-\frac{1}{2\pi i}
\int_{|\lambda|=\rho}
\lambda^nG(\lambda)\,d\lambda.
\]
Hence
\[
\|T^nP\|
\leq
\rho^{n+1}
\max_{|\lambda|=\rho}\|G(\lambda)\|.
\]
The integral defining $P$ is independent
of $\rho$, by Cauchy's theorem on the
punctured disk. Consequently,
\[
r(T|_{P(X)})\leq\rho
\]
for every sufficiently small $\rho>0$.
Thus $T|_{P(X)}$ is quasinilpotent.

\par\medskip\noindent\textit{Step 8.} We prove that $T|_{(I-P)X}$ is semi-regular.\par\smallskip
Finally, let $Z=(I-P)X$ and
$U=T|_Z$. Since $P$ commutes with
$T$ and $G(\lambda)$, the restriction
\[
g(\lambda)=G(\lambda)|_Z
\]
is well defined and extends
holomorphically to $\lambda=0$.

Since $(T-\lambda I)G(\lambda)$ is the projection
onto $\Ran(T-\lambda I)$ along $F$, and
$G(\lambda)(T-\lambda I)$ is the projection
onto $E$ along $\Ker(T-\lambda I)$, we have
\[
(T-\lambda I)G(\lambda)(T-\lambda I)
=T-\lambda I
\]
and
\[
G(\lambda)(T-\lambda I)G(\lambda)
=G(\lambda).
\]
Thus $G(\lambda)$ is a generalized inverse
of $T-\lambda I$.

As $P$ commutes with both $T$ and $G(\lambda)$,
restricting the first identity to $Z=(I-P)X$
gives
\[
(U-\lambda I)g(\lambda)(U-\lambda I)
=U-\lambda I.
\]
Passing to the limit as $\lambda\to0$, we obtain
\[
Ug(0)U=U.
\]
Hence $Ug(0)$ is a bounded projection onto
$\Ran(U)$, so $\Ran(U)$ is closed.

Let $x\in\Ker(U)$. The function
\[
f(\lambda)
=[I-g(\lambda)(U-\lambda I)]x
\]
is holomorphic near zero and satisfies
\[
(U-\lambda I)f(\lambda)=0,
\qquad f(0)=x.
\]
Writing
\[
f(\lambda)=\sum_{n=0}^{\infty}
x_n\lambda^n,
\]
we obtain
\[
x_0=x,\qquad Ux_{n+1}=x_n
\quad(n\ge0).
\]
Hence  $x\in\Ran^\infty(U)$. Thus
$\Ker(U)\subseteq\Ran^\infty(U)$.
Since $\Ran(U)$ is closed, $U$ is
semi-regular.

We have proved that
\[
X=(I-P)X\oplus P(X)
\]
is a generalized Kato decomposition of
$T$. Consequently, $T$ is
pseudo-Fredholm.

\end{proof}

The Hilbert-space characterization now follows directly.

\begin{corollary}\label{thm:characterization}
Let $H$ be a complex Hilbert space and let $T\in\B(H)$. The following statements are equivalent:
\begin{enumerate}[label=(\roman*)]
\item $T$ is pseudo-Fredholm.
\item $\cc(T)$, $\cc(T)+\qn(T)$ and $\Ran(T)+\qn(T)$ are closed in $H$.
\end{enumerate}
\end{corollary}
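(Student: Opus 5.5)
The direction (ii)$\Rightarrow$(i) will be deduced from Theorem~\ref{th:suff}. In a Hilbert space every closed subspace is complemented, since the orthogonal projection is bounded. Assume (ii). Then (C1), (C2) and (C4) hold directly, because $\Ran(T)+\qn(T)$ is closed and hence complemented. For (C3), $\cc(T)$ is closed by assumption and $\Ker(T)$ is always closed. So $\Ker(T)\cap\cc(T)$ is closed, and therefore complemented. Theorem~\ref{th:suff} then gives that $T$ is pseudo-Fredholm.

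For (i)$\Rightarrow$(ii), I would fix a generalized Kato decomposition $H=M\oplus N$ with $(M,N)\in Red(T)$, $T|_M$ semi-regular and $T|_N$ quasinilpotent. The plan is to compute the three subspaces in terms of this decomposition. Since the decomposition is topological, each $x$ splits as $x=x_M+x_N$, and $T^n$ acts componentwise. The targets are
\[
\cc(T)=\cc(T|_M)\oplus\cc(T|_N)=\Ran^\infty(T|_M),\qquad \qn(T)=\qn(T|_M)\oplus N.
\]

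For the quasinilpotent part, the bounded projections give $\|T^nx_M\|,\|T^nx_N\|\le c\|T^nx\|$. So $x\in\qn(T)$ if and only if both components lie in the respective quasinilpotent parts, and $\qn(T|_N)=N$. For the analytic core, a bounded backward chain $(x_n)$ for $x$ projects onto bounded backward chains for $x_M$ and $x_N$, with norms controlled by a constant times $c^n$; this constant is absorbed by enlarging $c$. The converse inclusion is immediate. Since $\cc(T|_N)=\{0\}$ for quasinilpotent $T|_N$ and $\cc(T|_M)=\Ran^\infty(T|_M)$ is closed for semi-regular $T|_M$, we get that $\cc(T)$ is closed. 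This gives the first closedness in (ii).

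Using the preliminary facts $\qn(T|_M)\subseteq\cc(T|_M)\subseteq\Ran(T|_M)$, we then obtain
\[
\cc(T)+\qn(T)=\Ran^\infty(T|_M)\oplus N,\qquad \Ran(T)+\qn(T)=\Ran(T|_M)\oplus N.
\]
Here $N$ absorbs $\Ran(T|_N)$ and $\qn(T|_M)$ is absorbed into the $M$-components. Both sums are direct sums of a closed subspace of $M$ with $N$. Because $H=M\oplus N$ is topological, such a sum is closed: it equals the preimage of the closed subspace under the bounded projection onto $M$ along $N$. This proves (ii).

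The only delicate point is the componentwise description of $\cc(T)$, namely keeping the bound $\|x_n\|\le c^n\|x\|$ after projecting. This needs $\|x_M\|$ in the denominator rather than $\|x\|$. If $x_M\ne0$, we have $\|P_Mx_n\|\le\|P_M\|c^n\|x\|$, and $\|x\|$ is only comparable to $\|x_M\|$ when $x_N=0$. So I will in fact apply this to $x$ already known to lie in $M$, where $\|x\|=\|x_M\|$. To do that, I first show $x_N=0$. Indeed, $x_N$ has a backward chain in $N$ with geometric bounds, so $x_N\in\cc(T|_N)=\{0\}$; here the relative normalization is irrelevant since we only need membership in a zero space, which holds via any geometric bound.
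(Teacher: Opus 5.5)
Your proof is correct and follows the paper's route: (ii)$\Rightarrow$(i) via Theorem~\ref{th:suff} and the complementability of closed subspaces of $H$ (you usefully make explicit that $\Ker(T)\cap\cc(T)$ is closed as an intersection of closed subspaces), and (i)$\Rightarrow$(ii) via the direct-sum formulas for $\cc(T)$, $\cc(T)+\qn(T)$ and $\Ran(T)+\qn(T)$ relative to a generalized Kato decomposition. The ``delicate point'' you raise is not a real obstacle: the constant $c$ in the definition of $\cc(T)$ may depend on $x$, so for $x_M\ne0$ the finite ratio $\|x\|/\|x_M\|$ is simply absorbed into $c$, although your workaround via $\cc(T|_N)=\{0\}$ is also valid.
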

\begin{proof}
Suppose (ii). Since every closed subspace of $H$ is complemented,
$\Ker(T)\cap \cc(T)$ and $\Ran(T)+\qn(T)$ are complemented in $H$.
Theorem~\ref{th:suff} gives (i).

Conversely, suppose (i).
Let $T=S\oplus Q$ be a generalized Kato decomposition, where $S$ is semi-regular and $Q$ is quasinilpotent. By \cite[Proposition~1.4]{Mbekhta}, $\cc(S)=\Ran^\infty(S)$ is closed, $\qn(S)\subseteq \cc(S)\subseteq \Ran(S)$, $\cc(Q)=0$ and $\qn(Q)=N$. Hence
\[
\left\{
 \begin{aligned}
 \cc(T)&=\cc(S)\oplus\{0\},\\
 \cc(T)+\qn(T)&=\cc(S)\oplus N,\\
 \Ran(T)+\qn(T)&=\Ran(S)\oplus N.
 \end{aligned}
\right.
\]
Thus all three spaces are closed.

\end{proof}

\begin{remark}
The necessity of the three closedness conditions holds on Banach spaces as well, by the same direct-sum argument. Their sufficiency on Hilbert spaces follows from the automatic complementability of closed subspaces. Section~\ref{sec:tsunified} shows why an additional assumption is needed on Banach spaces.
\end{remark}

In the following example, $\sigma_{\mathrm{ap}}(T)$ and
$\sigma_{\mathrm{su}}(T)$ denote the approximate point spectrum
and the surjective spectrum of $T$, respectively. We write
$\operatorname{acc}(A)$ for the set of accumulation points of
$A\subseteq\mathbb C$, and
$\mathbb T=\{z\in\mathbb C:|z|=1\}$.

Thus $\sigma_{\mathrm{ap}}(T)=\{\lambda\in\mathbb C:T-\lambda I\text{ is not bounded below}\}$ and $\sigma_{\mathrm{su}}(T)=\{\lambda\in\mathbb C:T-\lambda I\text{ is not surjective}\}$.

The following example shows that
$0\notin\operatorname{acc}\sigma_{\mathrm{ap}}(T)$
does not imply that $T$ is pseudo-Fredholm. It will also
be used in Section~\ref{sec:independence}.

\begin{example}\label{rem:approx-spectrum-counterexample}
Let $E=\ell^2(\mathbb N)$, $H=\ell^2(\mathbb N_0,E)$,
and let $U$ be the unilateral shift on $H$. Define
\[
Be_n=\frac1n e_n,\qquad Cy=(By,0,\ldots),
\]
and consider
\[
T=\begin{pmatrix}U&C\\0&0\end{pmatrix}
\in\B(H\oplus E).
\]
For every $m\ge1$ and $(x,y)\in H\oplus E$, we have
\[
T^m(x,y)=\bigl(U^{m-1}(Ux+Cy),0\bigr).
\]
Since $U$ is an isometry and the ranges of $U$ and $C$
are orthogonal,
\[
\|T^m(x,y)\|^2=\|x\|^2+\|By\|^2.
\]
As $B$ is injective, it follows that  $\qn(T)=\{0\}$. Moreover,
\[
\|T(0,e_n)\|=\frac1n\longrightarrow0.
\]
Thus $\Ran(T)$ is not closed, and $T$ is not semi-regular.
Since any quasinilpotent summand in a generalized Kato
decomposition of $T$ must be contained in $\qn(T)$,
we conclude that $T$ is not pseudo-Fredholm.

Now, $U-\lambda I$ is bounded below for $|\lambda|<1$
and invertible for $|\lambda|>1$. Hence
\[
T-\lambda I=
\begin{pmatrix}
U-\lambda I&C\\
0&-\lambda I
\end{pmatrix}
\]
is bounded below whenever $\lambda\ne0$ and
$|\lambda|\ne1$. On the other hand,
$0\in\sigma_{\mathrm{ap}}(T)$ and
\[
\mathbb T=\sigma_{\mathrm{ap}}(U)
\subseteq\sigma_{\mathrm{ap}}(T).
\]
Consequently,
\[
\sigma_{\mathrm{ap}}(T)=\{0\}\cup\mathbb T,
\qquad
0\notin\operatorname{acc}\sigma_{\mathrm{ap}}(T).
\]

This gives an affirmative answer to the approximate
point spectrum part of
\cite{CvetkovicZivkovic2017} (Question~4.4 in the arXiv version).
Taking the Hilbert adjoint gives the corresponding
example for the surjective spectrum, since
\[
\sigma_{\mathrm{su}}(T')
=\overline{\sigma_{\mathrm{ap}}(T)}
\]
and the existence of a generalized Kato decomposition
is preserved under taking Hilbert adjoints.
\end{example}

\section{Characterizations of pseudo-B-Fredholm and
pseudo-B-Weyl operators in terms of canonical subspaces}

The following theorem gives a corrected version of our
conjecture stated in \cite[p.~202]{AZO}.

\begin{theorem}\label{th:pseudo-B-Fredholm}
Let $X$ be a complex Banach space and
$T\in\mathcal B(X)$. Then $T$ is pseudo-B-Fredholm
if and only if the following conditions hold:
\begin{enumerate}[label=(\roman*)]
\item $\cc(T)$ is closed;
\item $\cc(T)+\qn(T)$ is closed;
\item $\dim(\cc(T)\cap \Ker(T))<\infty$ and 
$\mbox{codim}\,(\Ran(T)+\qn(T))<\infty.$
\end{enumerate}
\end{theorem}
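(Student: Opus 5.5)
We argue both directions through the generalized Kato decomposition, using Theorem~\ref{th:suff} for sufficiency and the direct-sum computation from the proof of Corollary~\ref{thm:characterization} for necessity.

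\emph{Necessity.} Let $T=F\oplus Q$ with $F$ Fredholm and $Q$ quasinilpotent. A Fredholm operator admits a Kato decomposition $F=F_1\oplus N_0$. Here $F_1$ is semi-regular and Fredholm, and $N_0$ is nilpotent on a finite-dimensional space. Hence $T=F_1\oplus(N_0\oplus Q)$ with $N_0\oplus Q$ quasinilpotent, which is a generalized Kato decomposition. The computation in the proof of Corollary~\ref{thm:characterization} applies verbatim on Banach spaces. It gives
\[
\cc(T)=\cc(F_1)\oplus\{0\},\qquad \cc(T)+\qn(T)=\cc(F_1)\oplus N,\qquad \Ran(T)+\qn(T)=\Ran(F_1)\oplus N,
\]
where $N$ is the space carrying $N_0\oplus Q$. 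This yields (i) and (ii). Moreover $\cc(T)\cap\Ker(T)=\cc(F_1)\cap\Ker(F_1)\subseteq\Ker(F_1)$, which is finite-dimensional. Also $\operatorname{codim}(\Ran(T)+\qn(T))=\operatorname{codim}\Ran(F_1)<\infty$, so (iii) holds.

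\emph{Sufficiency.} We verify (C1)--(C4) of Theorem~\ref{th:suff}. (C1) and (C2) are (i) and (ii). (C3) holds because a finite-dimensional subspace is complemented.

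(C4) is the main obstacle, since a finite-codimensional subspace need not be closed. Put $L=\cc(T)+\qn(T)$, which is closed by (ii), and $M=\Ran(T)+\qn(T)$. As in Step~1 of the proof of Theorem~\ref{th:suff}, $T^{-1}(L)=L$ and $TL\subseteq L$. Hence $T$ induces a bounded operator $D$ on the Banach space $X/L$, and $\Ran(D)=M/L$. Only (C1) and (C2) are used here, not (C4). Now $M/L$ is the range of a bounded operator with finite algebraic codimension. By the standard lemma such a range is closed: choose a finite-dimensional complement $W$ and apply the open mapping theorem to $(y,w)\mapsto Dy+w$ on $(X/L)/\Ker(D)\times W$. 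Therefore $M=q^{-1}(M/L)$ is closed, where $q\colon X\to X/L$ is the quotient map. A closed subspace of finite codimension is complemented, so (C4) holds.

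Theorem~\ref{th:suff} now gives a generalized Kato decomposition $T=S\oplus Q$ on $Z\oplus N$, with $S$ semi-regular and $Q$ quasinilpotent. It remains to show $S$ is Fredholm. Since $\cc(Q)=\{0\}$ and $\cc(S)=\Ran^\infty(S)$, semi-regularity gives
\[
\Ker(S)\subseteq\Ran^\infty(S)\cap\Ker(S)=\cc(T)\cap\Ker(T),
\]
which is finite-dimensional. Next, $\qn(S)\subseteq\Ran(S)$ and $\qn(Q)=N$, so $\Ran(T)+\qn(T)=\Ran(S)\oplus N$. Hence $\operatorname{codim}_Z\Ran(S)=\operatorname{codim}_X(\Ran(T)+\qn(T))<\infty$. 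Since $\Ran(S)$ is also closed, $S$ is Fredholm, and $T=S\oplus Q$ is pseudo-B-Fredholm.
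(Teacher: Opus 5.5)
Your proof is correct and follows the paper's argument essentially step for step. For necessity you use a generalized Kato decomposition whose semi-regular part is Fredholm, together with the direct-sum formulas; for sufficiency you deduce (C4) from the closedness of the finite-codimensional range $M/L$ of the operator induced on $X/L$, apply Theorem~\ref{th:suff}, and read off $\dim\Ker(S)$ and $\operatorname{codim}\Ran(S)$ from the decomposition (your explicit absorption of the finite-dimensional nilpotent Kato summand of $F$ into $Q$ is a step the paper leaves implicit).
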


\begin{proof}
Suppose first that $T$ is pseudo-B-Fredholm.
Since every Fredholm operator admits a Kato-type
decomposition, we may write $T=S\oplus Q$,
where $S$ is semi-regular and Fredholm, and
$Q$ is quasinilpotent.

Since $\cc(S)=\Ran^\infty(S)$ is closed and
$\qn(S)\subseteq \cc(S)\subseteq \Ran(S)$, we have
\[
\begin{aligned}
\cc(T)&=\cc(S)\oplus\{0\},\\
\cc(T)+\qn(T)&=\cc(S)\oplus N,\\
\cc(T)\cap \Ker(T)&=\Ker(S)\oplus\{0\},\\
\Ran(T)+\qn(T)&=\Ran(S)\oplus N,
\end{aligned}
\]
where $N$ denotes the space on which $Q$ acts.
Since $S$ is Fredholm, (i)--(iii) follow.

Conversely, suppose that (i)--(iii) hold.
Put
\[
L=\cc(T)+\qn(T),\qquad
M=\Ran(T)+\qn(T).
\]
Since $L$ is closed and $T^{-1}(L)=L$,
the operator induced by $T$ on $X/L$
is bounded and has range $M/L$.
By (iii), this range has finite codimension. Since it is the range of a bounded operator between Banach spaces, the open mapping theorem, applied after adjoining a finite-dimensional algebraic complement, shows that it is closed. Hence $M$ is closed and complemented in $X$.

Moreover, by (iii), $\Ker(T)\cap \cc(T)$
is finite-dimensional and therefore
complemented in $X$.
Thus Theorem~\ref{th:suff} implies that
$T$ is pseudo-Fredholm.

Let $T=S\oplus Q$ be a generalized Kato
decomposition on $X=Y\oplus N$, where
$S\in\mathcal B(Y)$ is semi-regular and
$Q\in\mathcal B(N)$ is quasinilpotent.
Then
\[
\dim \Ker(S)=\dim(\cc(T)\cap \Ker(T))<\infty
\]
and
\[
\dim\frac{Y}{\Ran(S)}
=\dim\frac{X}{\Ran(T)+\qn(T)}<\infty.
\]
Since $\Ran(S)$ is closed, $S$ is Fredholm.
Consequently, $T$ is pseudo-B-Fredholm.
\end{proof}

As a consequence, we obtain the following
characterization of pseudo-B-Weyl operators.

\begin{corollary}\label{cor:pseudo-B-Weyl}
Let $X$ be a complex Banach space and
$T\in\mathcal B(X)$. Then $T$ is pseudo-B-Weyl
if and only if the following conditions hold:
\begin{enumerate}[label=(\roman*)]
\item $\cc(T)$ is closed;
\item $\cc(T)+\qn(T)$ is closed;
\item
\[
\dim(\cc(T)\cap \Ker(T))
=
\mbox{codim}\,{\Ran(T)+\qn(T)}
<\infty.
\]
\end{enumerate}
\end{corollary}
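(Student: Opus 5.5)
The plan is to reduce everything to Theorem~\ref{th:pseudo-B-Fredholm} and track the index through a generalized Kato decomposition, using the subspace identities from that proof.

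For necessity, suppose $T=F\oplus Q$ with $F$ Weyl and $Q$ quasinilpotent. Then $T$ is pseudo-B-Fredholm, so (i), (ii) and the finiteness in (iii) follow from Theorem~\ref{th:pseudo-B-Fredholm}. For the equality we need a decomposition whose first summand is semi-regular. Since $F$ is Fredholm, it admits a Kato-type decomposition $F=S\oplus N_0$ with $S$ semi-regular and Fredholm and $N_0$ nilpotent on a finite-dimensional space. Stability of the index under finite-dimensional summands gives $\operatorname{ind}(S)=\operatorname{ind}(F)-\operatorname{ind}(N_0)=0$, because an operator on a finite-dimensional space has index zero. So $T=S\oplus(N_0\oplus Q)$, where $N_0\oplus Q$ is quasinilpotent. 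Applying the identities from the proof of Theorem~\ref{th:pseudo-B-Fredholm} to this decomposition gives
\[
\cc(T)\cap\Ker(T)=\Ker(S)\oplus\{0\},\qquad \Ran(T)+\qn(T)=\Ran(S)\oplus N',
\]
where $N'$ is the space on which $N_0\oplus Q$ acts. Hence
\[
\dim(\cc(T)\cap\Ker(T))=\dim\Ker(S)=\operatorname{codim}\Ran(S)=\operatorname{codim}(\Ran(T)+\qn(T)).
\]

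For sufficiency, (i)--(iii) contain the hypotheses of Theorem~\ref{th:pseudo-B-Fredholm}. Its proof produces a generalized Kato decomposition $X=Y\oplus N$, $T=S\oplus Q$, with $S$ semi-regular and Fredholm, and records
\[
\dim\Ker(S)=\dim(\cc(T)\cap\Ker(T)),\qquad \dim Y/\Ran(S)=\operatorname{codim}(\Ran(T)+\qn(T)).
\]
By (iii) these two numbers are equal, so $\operatorname{ind}(S)=0$ and $S$ is Weyl. Therefore $T=S\oplus Q$ is pseudo-B-Weyl.

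The only delicate point is in the necessity direction. The Weyl summand $F$ in an arbitrary decomposition need not be semi-regular, so the identities cannot be applied to $F$ directly. One must first pass to the Kato-type decomposition of $F$ and check that absorbing the finite-dimensional nilpotent part into the quasinilpotent summand leaves the index at zero. This holds because the index is additive over direct sums and vanishes on finite-dimensional spaces.
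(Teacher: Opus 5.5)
Your proof is correct, but it takes a slightly different route from the paper. The paper's proof is a two-line reduction. It quotes from \cite{AZO} that $T$ is pseudo-B-Weyl if and only if it is pseudo-B-Fredholm with $\operatorname{ind}(T):=\dim(\cc(T)\cap\Ker(T))-\operatorname{codim}(\Ran(T)+\qn(T))=0$, and then applies Theorem~\ref{th:pseudo-B-Fredholm} to the pseudo-B-Fredholm part.

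You instead prove that cited characterization inline. For necessity, you replace the Weyl summand $F$ by its Kato decomposition $S\oplus N_0$, with $N_0$ nilpotent on a finite-dimensional space. Additivity of the index then gives $\operatorname{ind}(S)=0$, and you read off both dimensions from the direct-sum identities for the semi-regular summand. For sufficiency, you reuse the final part of the proof of Theorem~\ref{th:pseudo-B-Fredholm}. That part shows that every generalized Kato decomposition has a semi-regular Fredholm summand whose kernel and cokernel dimensions are exactly the two quantities in (iii).

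Run for an arbitrary Fredholm summand rather than a Weyl one, your necessity computation shows $\operatorname{ind}(F)=\dim(\cc(T)\cap\Ker(T))-\operatorname{codim}(\Ran(T)+\qn(T))$ for every decomposition $T=F\oplus Q$. This is precisely the well-definedness of the index that the paper imports from \cite{AZO}. Your version is therefore self-contained, relying only on Kato's classical decomposition theorem for Fredholm operators and index additivity. It also makes explicit the subtlety you flagged, namely that $F$ need not be semi-regular. The paper's version is shorter but leaves that point to the external reference.
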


\begin{proof}
Recall that $T$ is pseudo-B-Weyl if and only
if it is pseudo-B-Fredholm with index \[
\operatorname{ind}(T)
:=
\dim(\cc(T)\cap \Ker(T))
-
\mbox{codim}\,{(\Ran(T)+\qn(T))}=0, \text{ see } \cite{AZO}\]
By Theorem~\ref{th:pseudo-B-Fredholm},
$T$ is pseudo-B-Fredholm if and only if
(i), (ii) and the finiteness of both
dimensions in (iii) hold.

Hence $\operatorname{ind}(T)=0$ if and only
if (iii) holds.
\end{proof}

\section{The obstruction on Banach spaces}\label{sec:tsunified}
We give an example showing that the closedness conditions in Corollary~\ref{thm:characterization} are not sufficient on arbitrary Banach spaces. It also shows that the kernel complementability assumption in Theorem~\ref{th:suff} cannot be omitted.

We first establish a proposition which will be used in the example. For $n\ge0$,
$T_{[n]}$ denotes the restriction of $T$ to $\Ran(T^n)$.  Recall that $T$ is upper (respectively, lower) semi-B-Fredholm
if $\Ran(T^n)$ is closed and $T_{[n]}$ is upper (respectively, lower) semi-Fredholm for some such
$n$; it is B-Fredholm if both conditions hold.
We use the degree of stable iteration
\[
 \operatorname{dis}(T)=\inf\{d\ge0:\Ker(T)\cap \Ran(T^n)=\Ker(T)\cap \Ran(T^d)
                    \text{ for all }n\ge d\}.
\]
An operator of finite degree $d$ is quasi-Fredholm if
$\Ran(T)+\Ker(T^d)$ and $\Ker(T)\cap \Ran(T^d)$ are closed;
see \cite{Labrousse1980,Muller2001}. The norm on $E\oplus_1 E$ is
$\|(x,y)\|=\|x\|+\|y\|$.

\begin{proposition}\label{thm:tsmaster}
Let $E\ne\{0\}$ be a complex Banach space, let $S\in\mathcal B(E)$ be surjective, and define on $X=E\oplus_1E$
\[
 T=T_S=\begin{pmatrix}S&I_E\\0&0\end{pmatrix},\qquad
 T(x,y)=(Sx+y,0).
\]
The following assertions hold.
\begin{enumerate}[label=\textup{(\roman*)}]
\item For every $n\ge1$,
\[
 T^n(x,y)=(S^{n-1}(Sx+y),0),\qquad
 \Ran(T^n)=E\oplus\{0\}.
\]
Moreover,
\[
 \cc(T)=E\oplus\{0\},\qquad
 \qn(T)=\{(x,y):Sx+y\in \qn(S)\}.
\]
In particular,
\[
 \cc(T)+\qn(T)=\Ran(T)+\qn(T)=X,\qquad
 \cc(T)\cap \Ker(T)=\Ker(S)\oplus\{0\}.
\]
\item The operator $T$ is quasi-Fredholm of degree $\operatorname{dis}(T)=1$ and lower semi-B-Fredholm.
\item The following are equivalent:
\begin{enumerate}[label=\textup{(\alph*)}]
\item $T$ is pseudo-Fredholm;
\item $T$ admits a Kato-type decomposition;
\item $T$ admits a direct-sum decomposition into a surjective operator and the zero operator;
\item $S$ has a bounded right inverse;
\item $\Ker(S)$ is complemented in $E$.
\end{enumerate}
\end{enumerate}
\end{proposition}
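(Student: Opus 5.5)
For (i) I would begin with a direct computation. Since $T(x,y)=(Sx+y,0)$, induction gives $T^n(x,y)=T^{n-1}(Sx+y,0)=(S^{n-1}(Sx+y),0)$. Taking $x=0$ already yields every $(y,0)$ for $n=1$. As $S$ is surjective, so is $S^{n-1}$, hence $\Ran(T^n)=E\oplus\{0\}$.

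For the analytic core, $\cc(T)\subseteq\Ran^\infty(T)=E\oplus\{0\}$ by \eqref{eq:corefacts}. For the reverse inclusion, fix $(u,0)$ and build the bounded backward chain $x_n=(0,u)$ for $n\ge1$: indeed $T(0,u)=(u,0)$ and $T(0,u)=(u,0)\ne(0,u)$, so this naive chain fails. I would instead use $x_1=(0,u)$ and then need $T x_2=(0,u)$, which is impossible since $\Ran(T)$ lies in $E\oplus\{0\}$. The chain must therefore stay in $E\oplus\{0\}$. Since $S$ is surjective, the open mapping theorem supplies a bounded-norm preimage: for every $v$ there is $w$ with $Sw=v$ and $\|w\|\le c\|v\|$. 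Iterating gives $x_n=(w_n,0)$ with $Sw_{n+1}=w_n$ and $\|w_n\|\le c^n\|u\|$, so $(u,0)\in\cc(T)$.

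For $\qn(T)$, the norm identity $\|T^n(x,y)\|=\|S^{n-1}(Sx+y)\|$ shows that $(x,y)\in\qn(T)$ if and only if $Sx+y\in\qn(S)$. The sums then follow: every $(x,y)$ equals $(x+y',0)+(-y',y)$ with $Sy'\ldots$. Simplest is to write $(x,y)=(x-a,0)+(a,y)$ where $Sa=-y$ is chosen by surjectivity, so $(a,y)\in\Ker(T)\subseteq\qn(T)$. Finally, $\Ker(T)\cap(E\oplus\{0\})=\Ker(S)\oplus\{0\}$.

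For (ii) I would use $\Ker(T)\cap\Ran(T^n)=\Ker(S)\oplus\{0\}$ for all $n\ge1$, which gives $\operatorname{dis}(T)\le 1$. It cannot be $0$, because $\Ker(T)=\{(x,-Sx)\}\ne\Ker(S)\oplus\{0\}$ when $E\ne\{0\}$. Next, $\Ran(T)+\Ker(T)=X$ and $\Ker(S)\oplus\{0\}$ are closed, so $T$ is quasi-Fredholm of degree $1$. The restriction $T_{[1]}$ acts on $\Ran(T)=E\oplus\{0\}$ as $S$, which is surjective and hence lower semi-Fredholm. Thus $T$ is lower semi-B-Fredholm.

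For (iii) I would run the cycle of implications:
\begin{itemize}
\item (e)$\Rightarrow$(d): if $E=\Ker(S)\oplus W$, then $S|_W$ is a bounded bijection onto $E$, and its inverse is the right inverse.
\item (d)$\Rightarrow$(c): given $SR=I$, take $N=\Ker(T)$ and $M=\{(x,0)\}$. The splitting $(x,y)=(x+Ry,0)+(-Ry,y)$ is bounded, and $(-Ry,y)\in\Ker(T)$ because $S(-Ry)+y=0$. Here $T|_M\cong S$ is surjective and $T|_N=0$.
\item (c)$\Rightarrow$(b)$\Rightarrow$(a): surjective operators are semi-regular, and zero is nilpotent.
\end{itemize}

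The main obstacle is (a)$\Rightarrow$(e). Take a generalized Kato decomposition $X=M\oplus N$ with $T|_M$ semi-regular and $T|_N$ quasinilpotent. I would use the standard facts that $\cc(T)=\cc(T|_M)=\Ran^\infty(T|_M)$ and that $\Ker(T|_M)\subseteq\cc(T)$, together with $\cc(T)\cap\Ker(T)=\Ker(T|_M)$ (as in the proof of Theorem~\ref{th:pseudo-B-Fredholm}). Hence $\Ker(S)\oplus\{0\}=\Ker(T|_M)$. Now $\Ker(T|_M)$ is complemented in $M$: $T|_M$ maps $M$ onto $\Ran(T|_M)\supseteq\cc(T)=E\oplus\{0\}$. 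If $T|_M$ maps onto $\cc(T|_M)=M\cap\Ran^\infty$, I would argue that $\Ran(T|_M)=E\oplus\{0\}$ exactly, because $\Ran(T)\subseteq E\oplus\{0\}$. Then $M\supseteq E\oplus\{0\}$. Using $T(E\oplus\{0\})=E\oplus\{0\}$ with $T$ acting as $S$, the subspace $E\oplus\{0\}\subseteq M$ is invariant, and $T|_M$ restricted there is $S$.

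To obtain complementation of $\Ker(S)$ in $E$, I would note that $\Ker(S)\oplus\{0\}$ is complemented in $X$. Intersecting the complement with $E\oplus\{0\}$ is not automatic, so I would instead compose projections. If $P$ is a bounded projection of $X$ onto $\Ker(T|_M)$ (obtained from $M\oplus N$ and the semi-regularity of $T|_M$ onto $E\oplus\{0\}$), then $x\mapsto \mathrm{pr}_1P(x,0)$ is a projection of $E$ onto $\Ker(S)$. The delicate point is producing $P$. My plan is to show that $T|_M\colon M\to E\oplus\{0\}$ is surjective with $M=(E\oplus\{0\})+(M\cap\Ker T)$, and to use that $N\subseteq\qn(T)$ forces $T(N)\subseteq\qn(S)\oplus 0$. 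I expect this bookkeeping to be the crux.
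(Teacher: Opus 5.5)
Your treatment of (i), (ii) and of the chain (e)$\Rightarrow$(d)$\Rightarrow$(c)$\Rightarrow$(b)$\Rightarrow$(a) is essentially right. However, the one nontrivial implication, (a)$\Rightarrow$(e), is not proved.

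The sentence ``$\Ker(T|_M)$ is complemented in $M$: $T|_M$ maps $M$ onto $\Ran(T|_M)$'' is not a valid inference. A bounded surjection need not have a complemented kernel, and $S$ itself is such a surjection in exactly the situation this proposition is meant to detect. Since $\Ker(T|_M)=\Ker(S)\oplus\{0\}$, that claim is just a restatement of (e). The same holds for your later claim that $\Ker(S)\oplus\{0\}$ is complemented in $X$. You end by leaving the construction of the projection $P$ open, and the observation $T(N)\subseteq\qn(S)\oplus\{0\}$ does not close the gap.

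There is also a slip in (d)$\Rightarrow$(c). $N=\Ker(T)$ is not a complement of $E\oplus\{0\}$, because $(E\oplus\{0\})\cap\Ker(T)=\Ker(S)\oplus\{0\}\neq\{0\}$ whenever $S$ is not injective. The correct complement is $N=\{(-Ry,y):y\in E\}$, which is what your splitting formula actually produces.

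The missing idea is to determine $M$ exactly, and you already have the ingredients:
\begin{itemize}
\item You know $E\oplus\{0\}=\cc(T)\subseteq M$ and $M\cap\Ker(T)=\Ker(T|_M)=\Ker(S)\oplus\{0\}\subseteq E\oplus\{0\}$.
\item If $(x,y)\in M$, pick $u$ with $Su=-y$. Then $(u,y)=(x,y)-(x-u,0)$ lies in $M\cap\Ker(T)$, so $y=0$. Hence $M=E\oplus\{0\}$.
\item Consequently $T(N)\subseteq N\cap\Ran(T)\subseteq N\cap M=\{0\}$.
\item Since $N$ is a closed complement of $E\oplus\{0\}$, the second-coordinate map is a bounded bijection of $N$ onto $E$, hence an isomorphism. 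So $N=\{(Uy,y):y\in E\}$ for some $U\in\B(E)$.
\item From $T(Uy,y)=(SUy+y,0)=0$ you get $SU=-I_E$. Thus $-U$ is a bounded right inverse of $S$, and $I_E+US$ is a bounded projection of $E$ onto $\Ker(S)$.
\end{itemize}
This is the paper's argument, and it needs no projection bookkeeping on $X$ at all.
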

\begin{proof}
The power formula is immediate. As $S$ is surjective, all the ranges $\Ran(T^n)$ equal $E\oplus\{0\}$, while
\[
 \Ker(T)=\{(x,-Sx):x\in E\}.
\]
Since $S$ is surjective, the open mapping theorem permits the successive choice of preimages $Sx_{r+1}=x_r$ with $\|x_r\|\le c^r\|x_0\|$ for a fixed $c\ge1$. This proves $\cc(S)=E$. Together with $\cc(T)\subseteq \Ran^\infty(T)$, it gives $\cc(T)=E\oplus\{0\}$. The formula for $\qn(T)$ follows from
$\|T^n(x,y)\|=\|S^{n-1}(Sx+y)\|$ for $n\ge1$.

For $(x,y)\in X$, choose $u\in E$ with $Su=-y$; then
$(x,y)=(x-u,0)+(u,y)$ with $(u,y)\in \Ker(T)\subseteq \qn(T)$.
This proves the sum identities and the formula for the intersection. Since
$\Ker(T)\cap \Ran(T^n)=\Ker(S)\oplus\{0\}$ for every $n\ge1$, but $\Ker(T)$ is strictly larger, $\operatorname{dis}(T)=1$. Both $\Ran(T)+\Ker(T)$ and $\Ker(T)\cap \Ran(T)$ are closed, so $T$ is quasi-Fredholm. On $\Ran(T^n)=E\oplus\{0\}$, $T$ restricts to $S$, which is surjective. Hence $T$ is lower semi-B-Fredholm.

If $SR=I_E$ with $R$ bounded, put
\[
 M=E\oplus\{0\},\qquad N=\{(-Ry,y):y\in E\}.
\]
They are closed invariant complements, $T|_M$ is surjective, and $T|_N=0$. Thus (d) implies (c), (b) and (a).

Conversely, suppose $X=M\oplus N$ is a generalized Kato decomposition of $T$, with $T|_M$ semi-regular and $T|_N$ quasinilpotent. As $\cc(T|_N)=\{0\}$, the direct-sum formula for analytic cores gives
$E\oplus\{0\}=\cc(T)\subseteq M$. If $(x,y)\in M$ with $y\ne0$, choose $u\in E$ such that $Su=-y$. Since $E\oplus\{0\}\subseteq M$, we have $(u,y)\in M\cap \Ker(T)$, but $(u,y)\notin \Ran(T|_M)\subseteq E\oplus\{0\}$, contradicting semi-regularity. Hence $M=E\oplus\{0\}$.
The closed complement $N$ is the graph of a bounded operator $U:E\to E$. Invariance and $T(N)\subseteq M$ give $T(N)=\{0\}$, so $SU=-I_E$. Hence $-U$ is a bounded right inverse of $S$, proving (a)$\Rightarrow$(d). Since the decomposition in (b) is a generalized Kato decomposition, (b)$\Rightarrow$(a) also holds; (c)$\Rightarrow$(a) is immediate.
Finally, $SR=I_E$ gives the bounded projection $I-RS$ onto $\Ker(S)$. Conversely, a closed complement of $\Ker(S)$ yields a bounded inverse to the restriction of $S$ to that complement. Thus (d)$\Leftrightarrow$(e).
\end{proof}

\begin{corollary}\label{cor:banachobstruction}
Let $S$ be a surjective operator on a complex Banach space $E$ whose kernel is not complemented, and let $T_S$ be as in Proposition~\ref{thm:tsmaster}. Then $T_S$ is quasi-Fredholm of degree one and lower semi-B-Fredholm. Moreover,
\[
 \cc(T_S)=\Ran(T_S)=E\oplus\{0\},\qquad \cc(T_S)+\qn(T_S)=\Ran(T_S)+\qn(T_S)=X.
\]
In particular, the three subspaces $\cc(T_S)$, $\cc(T_S)+\qn(T_S)$ and $\Ran(T_S)+\qn(T_S)$ are closed, but $T_S$ is not pseudo-Fredholm. It does not admit a Kato-type decomposition or a decomposition into a semi-Fredholm operator and a nilpotent operator.
\end{corollary}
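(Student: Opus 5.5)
This corollary follows almost entirely from Proposition~\ref{thm:tsmaster}; the plan is to read off each assertion and then treat the final claim about semi-Fredholm plus nilpotent decompositions separately. First, part (ii) of Proposition~\ref{thm:tsmaster} gives at once that $T_S$ is quasi-Fredholm of degree one and lower semi-B-Fredholm.

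Next, part (i) gives $\cc(T_S)=\Ran(T_S)=E\oplus\{0\}$. It also gives $\cc(T_S)+\qn(T_S)=\Ran(T_S)+\qn(T_S)=X$. The space $E\oplus\{0\}$ is closed in $E\oplus_1E$, and $X$ is trivially closed, so all three closedness conditions of Corollary~\ref{thm:characterization} hold.

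Since $\Ker(S)$ is not complemented, condition (e) of part (iii) fails. By the equivalences (a)$\Leftrightarrow$(b)$\Leftrightarrow$(e), $T_S$ is therefore neither pseudo-Fredholm nor admits a Kato-type decomposition.

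It remains to exclude a decomposition $T_S=F\oplus N_0$ with $F$ semi-Fredholm and $N_0$ nilpotent. This is the only step not literally contained in part (iii), and it is where a small argument is needed. The key fact is that every semi-Fredholm operator admits a Kato-type decomposition. Indeed, by Kato's decomposition theorem, $F=F_1\oplus F_2$ with $F_1$ semi-regular and $F_2$ nilpotent on a finite-dimensional space. Combining the two nilpotent pieces, $T_S=F_1\oplus(F_2\oplus N_0)$ is a Kato-type decomposition, because a direct sum of two nilpotent operators is nilpotent. This contradicts what was shown above.

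If one prefers to avoid citing Kato's theorem, an alternative route is available. A semi-Fredholm-plus-nilpotent sum is quasi semi-B-Fredholm and hence pseudo semi-B-Fredholm, so in particular it is pseudo-Fredholm, which again contradicts (a). I expect citing the standard Kato decomposition of semi-Fredholm operators to be the only real point requiring a reference; everything else is bookkeeping from Proposition~\ref{thm:tsmaster}.
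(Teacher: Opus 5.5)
Your proposal is correct and follows the paper's argument. The paper also reads everything off Proposition~\ref{thm:tsmaster}: a non-complemented $\Ker(S)$ rules out a bounded right inverse, which is the same equivalence (d)$\Leftrightarrow$(e) that you use directly. For the semi-Fredholm-plus-nilpotent case, the paper (as in Example~\ref{ex:separable}) likewise invokes the Kato decomposition of the semi-Fredholm summand.

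One caveat: your ``alternative route'' does not actually avoid Kato's theorem. The step from pseudo semi-B-Fredholm to pseudo-Fredholm is itself the fact that a semi-Fredholm operator admits a (generalized) Kato decomposition.
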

\begin{proof}
Since $\Ker(S)$ is not complemented, $S$ has no bounded linear right inverse. The assertions follow from Proposition~\ref{thm:tsmaster}.
\end{proof}

\begin{example}\label{ex:separable}
We give a concrete separable Banach-space example. Let
$q:\ell^1\to\ell^2$ be a bounded surjection; its existence follows from the classical fact that every separable Banach space is a quotient of $\ell^1$ (see \cite{Carothers}).

Put $E=\ell^1\oplus_1\ell^2$. For $a=(a_j)\in\ell^1$, write
$a_{\rm o}=(a_{2j-1})$ and $a_{\rm e}=(a_{2j})$ and define
\[
 S(a,b)=(a_{\rm o},q(a_{\rm e})),\qquad (a,b)\in E.
\]
We first check boundedness and surjectivity. For $(a,b)\in E$,
\[
 \|S(a,b)\|=\|a_{\rm o}\|_1+\|q(a_{\rm e})\|_2
 \le\max\{1,\|q\|\}\|a\|_1
 \le\max\{1,\|q\|\}\|(a,b)\|.
\]
Given $(u,v)\in\ell^1\oplus_1\ell^2$, choose $d\in\ell^1$ with
$q(d)=v$. Interleave $u$ and $d$ to obtain $a$ with
$a_{\rm o}=u$ and $a_{\rm e}=d$. Then $S(a,0)=(u,v)$.

Suppose that $S$ has a bounded right inverse $R$. For $z\in\ell^2$,
write $R(0,z)=(a(z),b(z))$ and put $Lz=a(z)_{\rm e}$.
The map $L:\ell^2\to\ell^1$ is bounded and linear. The equality
$SR(0,z)=(0,z)$ implies $qLz=z$, so $qL=I_{\ell^2}$.
Let $(e_n)$ be the canonical basis of $\ell^2$. Since $e_n\to0$
weakly, $Le_n\to0$ weakly in $\ell^1$. The Schur property gives
$\|Le_n\|_1\to0$, whereas
\[
 1=\|e_n\|_2=\|qLe_n\|_2\le\|q\|\,\|Le_n\|_1,
\]
a contradiction. Thus $S$ has no bounded right inverse, and $\Ker(S)$
is not complemented. By Corollary~\ref{cor:banachobstruction}, the operator $T_S$ on $E\oplus_1 E$ satisfies all three closedness conditions but is not pseudo-Fredholm. Moreover, $T_S$ is not pseudo semi-B-Fredholm. Indeed, otherwise its semi-Fredholm summand would admit a Kato decomposition, and $T_S$ would be pseudo-Fredholm, a contradiction. In particular, $T_S$ is not pseudo-B-Fredholm. However, it is lower semi-B-Fredholm by Proposition~\ref{thm:tsmaster}.
\end{example}

\begin{remark}\label{rem:history}
\begin{enumerate}    
\item In \cite[Theorem~3.2.1]{Labrousse1980}, Labrousse proved that a quasi-Fredholm operator on a Hilbert space admits a Kato-type decomposition. In the Banach-space case, M\"uller \cite[Theorem~5]{Muller2001} obtained the same conclusion by assuming, in addition, that $\Ran(T)+\Ker(T^d)$ and $\Ker(T)\cap \Ran(T^d)$ are complemented, where $d=\operatorname{dis}(T)$. These assumptions are automatically satisfied on Hilbert spaces. The preceding example shows that they cannot simply be omitted in general.

\item Moreover, this example gives an affirmative answer to the open question raised in \cite[p.~113]{AznayKato2023} concerning the converse of the implication from quasi semi-B-Fredholm to semi-B-Fredholm. It also shows that the Hilbert-space characterization in \cite[Theorem~2.6]{BerkaniSarih2001} does not extend to arbitrary Banach spaces. Indeed, $T_S$ is lower semi-B-Fredholm but is not pseudo-Fredholm, and hence is not quasi semi-B-Fredholm. Thus, a semi-B-Fredholm operator on a Banach space does not necessarily admit a decomposition into a semi-Fredholm operator and a nilpotent operator.
\end{enumerate}
\end{remark}

The next result shows that the analytic core, the quasinilpotent part and the kernels and ranges of all powers do not determine pseudo-Fredholmness on Banach spaces.

\begin{theorem}\label{th:obstruction}
There exist a complex Banach space $X$ and two operators $T_0,T_1\in\B(X)$ such that for every $n\ge1$,
\begin{equation}\label{eq:coincide}
\cc(T_0)=\cc(T_1),\qquad
\qn(T_0)=\qn(T_1),\qquad
\Ran(T_0^n)=\Ran(T_1^n),\qquad
\Ker(T_0^n)=\Ker(T_1^n),
\end{equation}
but $T_0$ is pseudo-Fredholm and $T_1$ is not. Both operators satisfy the three closedness conditions
\[
\cc(T_i)\text{ closed},\quad \cc(T_i)+\qn(T_i)\text{ closed},\quad
\Ran(T_i)+\qn(T_i)\text{ closed}.
\]
\end{theorem}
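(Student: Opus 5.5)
Let me plan: use Proposition \ref{thm:tsmaster}. Take $E$ a Banach space admitting two surjections $S_0,S_1$ with the same kernel, where $S_0$ has a bounded right inverse and $S_1$ does not? Impossible: the kernel is complemented or not regardless of the surjection. So I need a different idea. For $T_S$, $\Ker(T_S^n)=\{(x,y):S^{n-1}(Sx+y)=0\}$ depends on $\Ker(S^n)$, and $\qn(T_S)$ depends on $\qn(S)$, while $\cc$ and the ranges are fixed. Use instead the direct sum $X=E\oplus_1E\oplus_1 E\oplus_1 E$ with $T_0=T_{S}\oplus 0$-type pieces arranged so that both operators have the same subspaces.

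Concrete plan. Take $E$, $S$ as in Example~\ref{ex:separable}, with $\Ker(S)$ not complemented. Put $W=E\oplus_1E$ and consider on $W$ the operators $T_1=T_S$ and $T_0=S'\oplus 0$, where $S'(x,y)$ should be a surjection of $E\oplus\{0\}$ with a right inverse. The key step is to match the kernels: we need $\Ker(T_0^n)=\Ker(T_1^n)=\{(x,y):S^{n-1}(Sx+y)=0\}$. So I instead try to pick $T_0$ with the same kernels and ranges but decomposable. Try $T_0(x,y)=(Sx+y,0)$ composed with a change: define $T_0=T_1 P$-style, e.g. $T_0(x,y)=(Sx+y,0)$ on a different space. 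Given the difficulty, first choice: keep $X$ fixed, seek an invertible $R$ with $T_0=RT_1R^{-1}$? Conjugation preserves pseudo-Fredholmness, so no. Better: $T_0=VT_1$ with $V$ invertible commuting appropriately on $E\oplus\{0\}$: $T_0(x,y)=(V(Sx+y),0)$ where $V\in\B(E)$ invertible. Then $\Ran(T_0^n)=E\oplus\{0\}$, $\Ker T_0=\Ker T_1$, but $\Ker T_0^n=\{(x,y): (VS)^{n-1}V(Sx+y)=0\}$. Pseudo-Fredholmness of $T_0$ is equivalent (by Proposition~\ref{thm:tsmaster}(iii) after the substitution $(x,y)\mapsto(x,V y)$-type rewriting, i.e. $T_0$ is similar to $T_{VS}$) to $\Ker(VS)=\Ker(S)$ being complemented — again fails. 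So within this family kernels of $S$ control everything; I must let $E$ differ.

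Hence the main plan: take $X=W\oplus_1 W'$ where $T_1=T_S\oplus Z$ and $T_0=T_{S_c}\oplus Z'$ with $S_c$ a surjection with complemented kernel, and use extra summands to force equality of all the listed subspaces. Since the subspaces for $T_S$ are all explicit (ranges $E\oplus 0$, $\cc=E\oplus0$, $\qn$ and kernels given via $\qn(S)$ and $\Ker(S^n)$), it suffices to find two surjections $S_0,S_1$ on the same $E$ with $\Ker(S_0^n)=\Ker(S_1^n)$ for all $n$ and $\qn(S_0)=\qn(S_1)$, $\Ker S_0$ complemented... which again contradicts $\Ker S_0=\Ker S_1$. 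The resolution must be that equal kernels of $T_i$ do not require equal kernels of $S_i$ when the "$y$" coordinate also varies: use $T(x,y)=(Sx+Ry,0)$ with $R$ surjective not equal to $I$. Then $\Ker T=\{(x,y):Sx=-Ry\}$, and pseudo-Fredholmness requires a bounded $U$ with $SU=-R$ on a complement. The hardest step, which I would attack first, is choosing $S,R_0,R_1$ (e.g. $S=0$-free, $R_1$ with uncomplemented kernel, $R_0=I$, $S$ the surjection of Example~\ref{ex:separable}) so that $\Ker(T_0^n)=\Ker(T_1^n)$, $\qn$ agree, and $\Ran(R_i)=E$; then closedness follows as in Proposition~\ref{thm:tsmaster}(i), and the decomposition criterion follows by repeating the argument of (iii), with the existence of a bounded lift $U$ being the distinguishing property.
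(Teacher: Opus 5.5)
Your proposal never reaches a construction. It ends by calling the choice of $S,R_0,R_1$ "the hardest step" without making it, so none of the equalities in \eqref{eq:coincide} is proved, and neither is either pseudo-Fredholm claim. Worse, the family you settle on cannot produce the required pair. Let $T_i(x,y)=(S_ix+R_iy,0)$ with $S_i\in\B(E)$ surjective, and put $A_i(x,y)=S_ix+R_iy$. Then $\Ran(T_i)=E\oplus\{0\}$ and $\Ker(T_i)=\Ker(A_i)$. So $\Ker(T_0)=\Ker(T_1)$ says that the two surjections $A_0,A_1\colon E\oplus E\to E$ have the same kernel. By the open mapping theorem, $A_1=WA_0$ for some invertible $W\in\B(E)$, i.e.\ $S_1=WS_0$ and $R_1=WR_0$. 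With a common surjective $S$ this is immediate: given $y$, pick $x$ with $Sx=-R_0y$; then $(x,y)\in\Ker(T_1)$ forces $R_1y=R_0y$. Hence $R_0=R_1$, and your choice $R_0=I\ne R_1$ is impossible.

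On the other hand, the argument of Proposition~\ref{thm:tsmaster}(iii) applies verbatim. Since $\cc(T_i)=E\oplus\{0\}$, any generalized Kato decomposition has $M=E\oplus\{0\}$ and $N$ the graph of a bounded $U$. Thus $T_i$ is pseudo-Fredholm if and only if $S_iU=-R_i$ has a bounded solution $U$. Because $WS_0U=-WR_0\iff S_0U=-R_0$, in this family $T_0$ and $T_1$ are either both pseudo-Fredholm or both not. Note also that taking $S$ from Example~\ref{ex:separable} and $R_0=I$ gives $T_0=T_S$, which is exactly the non-pseudo-Fredholm operator of Corollary~\ref{cor:banachobstruction}.

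The missing idea is to make the operator induced on the quotient $X/(E\oplus\{0\})$ injective rather than zero. A zero quotient block produces kernel vectors $(u,y)$ that expose the coupling; an injective one hides the coupling from all kernels and ranges of powers. The paper keeps the same $S$, whose kernel is not complemented, in both operators. It takes $X=E\oplus_1F$ with $F=\ell^2(\mathbb N_0)$, an injective compact quasinilpotent weighted shift $Q$ on $F$, and $Jy=(0,y)$, so that $SJ=0$. Then $T_0=S\oplus Q$ is pseudo-Fredholm for trivial reasons, even though $\Ker(T_0)\cap\cc(T_0)=\Ker(S)\oplus\{0\}$ is not complemented. For $T_1=\begin{pmatrix}S&J\\0&Q\end{pmatrix}$ one has $T_1^n(x,y)=(S^nx+JQ^{n-1}y,Q^ny)$. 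By surjectivity of $S$ and injectivity of $Q$, this gives the same kernels and ranges of all powers as $T_0$, with $\cc(T_i)=E\oplus\{0\}$ and $\qn(T_i)=\qn(S)\oplus F$. A generalized Kato decomposition of $T_1$ would force $M=\cc(T_1)=E\oplus\{0\}$ and $N$ the graph of a bounded $V\colon F\to E$ with $SV+J=VQ$. Its $\ell^2$-coordinate reads $I_F=V_2Q-qP_{\mathrm e}V_1$. The right-hand side is compact, because $Q$ is compact and every bounded operator $\ell^2\to\ell^1$ is compact (reflexivity plus the Schur property), which is absurd. So the obstruction is an unsolvable operator equation. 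It is not the non-complementability of a kernel, which both operators share.
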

\begin{proof}
Let $E=\ell^1\oplus_1\ell^2$ and let $S\in\B(E)$ be the surjective
operator from Example~\ref{ex:separable}. Put $F=\ell^2(\mathbb N_0)$ and define
$J:F\to E$ by $Jy=(0,y)$. Then $SJ=0$.
Let $Q\in\B(F)$ be the weighted shift
\[
Qe_k=\frac{1}{k+1}e_{k+1}\quad(k\ge0).
\]
It is injective, compact and quasinilpotent, with $\|Q^n\|=1/n!$. On $X=E\oplus_1F$, define
\[
T_0=\begin{pmatrix}S&0\\0&Q\end{pmatrix},\qquad
T_1=\begin{pmatrix}S&J\\0&Q\end{pmatrix}.
\]
Since $E$ and $F$ are separable, $X$ is separable.
The operator $T_0$ is pseudo-Fredholm because $S$ is surjective and $Q$ is quasinilpotent.

Since $SJ=0$, for every $n\ge1$,
\[
T_1^n(x,y)=(S^nx+JQ^{n-1}y,Q^ny).
\]
The injectivity of $Q^n$ and surjectivity of $S^n$ imply
\[
\Ker(T_1^n)=\Ker(S^n)\oplus\{0\}=\Ker(T_0^n),\qquad
\Ran(T_1^n)=E\oplus\Ran(Q^n)=\Ran(T_0^n).
\]
Both operators induce $Q$ on the quotient $X/(E\oplus\{0\})$. Since $\cc(Q)=\{0\}$, their analytic cores are contained in $E\oplus\{0\}$. Conversely, $\cc(S)=E$ since $S$ is surjective. Hence
\[
\cc(T_0)=\cc(T_1)=E\oplus\{0\}.
\]
Moreover, $\|Q^ny\|^{1/n}\to0$ and $\|JQ^{n-1}y\|^{1/n}\to0$ for every $y\in F$. The above formula for the powers gives
\[
\qn(T_0)=\qn(T_1)=\qn(S)\oplus F.
\]
Thus \eqref{eq:coincide} holds, and both operators satisfy
\[
\cc(T_i)=E\oplus\{0\},\qquad
\cc(T_i)+\qn(T_i)=\Ran(T_i)+\qn(T_i)=X.
\]

Suppose, toward a contradiction, that $T_1$ is pseudo-Fredholm, and write $X=M\oplus N$ for a generalized Kato decomposition. Since the quasinilpotent restriction has analytic core zero and $\qn(T_1|_M)\subseteq \cc(T_1|_M)$, we have
\[
X=\cc(T_1)+\qn(T_1)=\cc(T_1|_M)\oplus N.
\]
As $X=M\oplus N$, it follows that $M=\cc(T_1)=E\oplus\{0\}$. Hence $N$ is the graph of some bounded map $V:F\to E$, since the second-coordinate projection is an isomorphism from $N$ onto $F$. Its invariance gives
\begin{equation}\label{eq:sylvester}
SV+J=VQ.
\end{equation}
Write $V=(V_1,V_2)$ with $V_1:F\to\ell^1$ and $V_2:F\to\ell^2$, and denote by $P_{\mathrm e}:\ell^1\to\ell^1$ the even-coordinate extraction. Taking the second $E$-coordinate in \eqref{eq:sylvester} yields
\[
I_F=V_2Q-qP_{\mathrm e}V_1.
\]
Since $\ell^2$ is reflexive and $\ell^1$ has the Schur property, $V_1$ is compact. Hence both $V_2Q$ and $qP_{\mathrm e}V_1$ are compact, which contradicts the non-compactness of $I_F$. Thus $T_1$ is not pseudo-Fredholm.
\end{proof}

\begin{corollary}
On general complex Banach spaces, even on separable ones, pseudo-Fredholmness cannot be characterized solely by the closedness or complementability of sums and intersections of $\cc(T)$, $\qn(T)$, $\Ran(T^n)$ and $\Ker(T^n)$, $n\ge1$.
\end{corollary}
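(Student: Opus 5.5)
The corollary should follow directly from Theorem~\ref{th:obstruction}, by reading its statement as a claim about any criterion built from the listed subspaces. The plan is to formalize what ``characterized solely by'' means and then exhibit two operators that such a criterion cannot tell apart.

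\emph{Formalization.} Consider any condition $\mathcal P(T)$ of the following kind. It is a Boolean combination of assertions that a certain subspace is closed, or is complemented in $X$. Each such subspace is obtained from the family
\[
\{\cc(T),\ \qn(T)\}\cup\{\Ran(T^n),\ \Ker(T^n):n\ge1\}
\]
by finitely many (or even countably many) operations of sum and intersection. Such a condition depends on $T$ only through the subspaces in this family. More precisely, if two operators $T_0,T_1$ on the same space $X$ have identical families, then every derived sum or intersection coincides. Consequently every closedness or complementability assertion has the same truth value for $T_0$ and $T_1$, so $\mathcal P(T_0)\Leftrightarrow\mathcal P(T_1)$.

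\emph{Applying the theorem.} Theorem~\ref{th:obstruction} provides a separable Banach space $X=E\oplus_1F$ and operators $T_0,T_1$ satisfying \eqref{eq:coincide} for all $n\ge1$. Moreover, $T_0$ is pseudo-Fredholm and $T_1$ is not. If some $\mathcal P$ of the above type were equivalent to pseudo-Fredholmness on all Banach spaces, or even on all separable ones, we would obtain
\[
\text{$T_0$ pseudo-Fredholm}\iff \mathcal P(T_0)\iff\mathcal P(T_1)\iff \text{$T_1$ pseudo-Fredholm}.
\]
This is a contradiction.

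\emph{Where the difficulty lies.} There is no real obstacle here: all the substantive work (the compactness argument showing $T_1$ is not pseudo-Fredholm) was already done in the proof of Theorem~\ref{th:obstruction}. The only point needing care is to state precisely the class of admissible conditions. The key feature is that ``closed'' and ``complemented'' are properties of a subspace of the fixed ambient space $X$, not of the operator. This holds because $T_0$ and $T_1$ act on the same space $X$ with the same norm.
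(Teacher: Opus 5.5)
Your proposal is correct and follows the same argument as the paper. Theorem~\ref{th:obstruction} gives two operators on the same separable space with identical $\cc$, $\qn$, $\Ran(T^n)$ and $\Ker(T^n)$, so every closedness or complementability condition on their sums and intersections takes the same value for both, yet only $T_0$ is pseudo-Fredholm; your formalization of the admissible conditions, and your remark that these properties depend only on the subspace of the fixed ambient space $X$, simply make precise the paper's one-line proof.
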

\begin{proof}
By Theorem~\ref{th:obstruction}, these subspaces are the same for $T_0$ and $T_1$, although $T_0$ is pseudo-Fredholm and $T_1$ is not. Thus any condition involving only their sums, intersections, closedness or complementability cannot distinguish between the two operators.
\end{proof}

\section{Independence of the three closedness conditions}\label{sec:independence}
We now show that none of the three conditions in Corollary~\ref{thm:characterization} can be omitted. Two of the required examples are already available.

For the bilateral shift $B$ from Section~\ref{sec:mbekhta_questions}, Propositions~\ref{prop:bil} and~\ref{prop:bilcores} give
\[
 \cc(B)+\qn(B)=\Ran(B)+\qn(B)=\ell^2(\mathbb Z),
\]
while $\cc(B)$ is not closed.

Next consider the block operator $A=\begin{pmatrix}U&C\\0&0\end{pmatrix}$ from Example~\ref{rem:approx-spectrum-counterexample}. We proved that $\qn(A)=\{0\}$ and that $\Ran(A)$ is not closed. Moreover,
$\Ran(A^n)\subseteq U^{n-1}H\oplus\{0\}$ for all $n\ge1$. Since $\bigcap_{n\ge1}U^{n-1}H=\{0\}$, we have $\cc(A)\subseteq \Ran^\infty(A)=\{0\}$. Thus both $\cc(A)$ and $\cc(A)+\qn(A)$ are closed, whereas $\Ran(A)+\qn(A)$ is not.

We now construct a unilateral weighted shift for which $\cc(T)$ and
$\Ran(T)+\qn(T)$ are closed, but $\cc(T)+\qn(T)$ is not. We arrange
the weights in stages, each starting with small weights and ending with a
long string of weights equal to $1$.

Let $H=\ell^2(\mathbb N_0)$ with canonical orthonormal basis $(e_j)_{j\ge0}$. Define
\[
 b_n=\sum_{\nu=1}^{n-1}\nu(\nu+1)=\frac{(n-1)n(n+1)}3,\qquad n\ge1.
\]
Thus $b_1=0$, $b_2=2$, $b_3=8$, $b_4=20$, and $b_{n+1}-b_n=n(n+1)$. Stage $n$ consists of the weights with indices $b_n\le j<b_{n+1}$. In this stage, write $n$ copies of each of the numbers
\[
2^{-n},\ 2^{-(n-1)},\ldots,\ 2^{-1},\ 1,
\]
in that order. Equivalently, for every $j\ge0$, choose the unique $n$ with $b_n\le j<b_{n+1}$ and put
\begin{equation}\label{eq:weights}
 w_j=2^{-n+\lfloor(j-b_n)/n\rfloor},\qquad Te_j=w_je_{j+1}.
\end{equation}
For use in the proof, the same definition can be written
\begin{equation}\label{eq:levelweights}
 w_{b_n+(n-m)n+s}=2^{-m},\quad 0\le m\le n,\quad0\le s<n.
\end{equation}
Indeed, writing $j-b_n=qn+s$, with $0\le s<n$, and putting $m=n-q$, we obtain \eqref{eq:levelweights}.
\[
\begin{array}{c|c|l}
\text{stage}&\text{indices}&\text{weights in order}\\\hline
1&0,1&\tfrac12,1\\
2&2,\ldots,7&\tfrac14,\tfrac14,\tfrac12,\tfrac12,1,1\\
3&8,\ldots,19&\tfrac18,\tfrac18,\tfrac18,\tfrac14,\tfrac14,\tfrac14,
\tfrac12,\tfrac12,\tfrac12,1,1,1
\end{array}
\]
For instance, $w_{12}=2^{-3+\lfloor4/3\rfloor}=1/4$.

\begin{theorem}\label{thm:main}
The operator $T$ defined by \eqref{eq:weights} is an injective contraction satisfying
\[
 \cc(T)=\{0\},\qquad \Ran(T)+\qn(T)=H,\qquad
 \overline{\qn(T)}=H\ne \qn(T).
\]
Moreover, $\|T^r\|=1$ for every $r\ge1$ and $T^rx\to0$ for every $x\in H$. Its range is not closed, and it is not pseudo-Fredholm.
\end{theorem}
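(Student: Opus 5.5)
Everything is obtained by direct computation with the products of consecutive weights. For $j\ge0$ and $r\ge1$ put $W(j,r)=w_jw_{j+1}\cdots w_{j+r-1}$, so that $T^re_j=W(j,r)e_{j+r}$. Since $0<w_j\le1$, $T$ is an injective contraction. Stage $n$ ends with a block of $n$ weights equal to $1$. Hence $\|T^re_j\|=1$ whenever $r\le n$ and $[j,j+r)$ lies in that block, which gives $\|T^r\|=1$ for every $r$. Every window of length $\ge 2$ eventually crosses the start of a later stage, where the first weight is $2^{-n}$, so $W(j,r)\to0$ as $r\to\infty$ for fixed $j$. Since $T$ is a contraction, $T^rx\to0$ follows by density of finitely supported vectors. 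The range is not closed because $T$ is injective and $\inf_j w_j=0$, so $T$ is not bounded below. Finally, $\overline{\qn(T)}=H$, because each $e_j$ lies in $\qn(T)$. Indeed, for $r$ large the window $[j,j+r)$ contains all of stages $n_0,\dots,n_1-1$, with $n_1\sim c\,r^{1/3}$. Each full stage $\nu$ contributes the factor $2^{-n\sum_{m\le \nu}m}$, roughly $2^{-\nu^3/2}$. The total product is therefore about $2^{-c' r^{4/3}}$, so $W(j,r)^{1/r}\to0$.

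For $\cc(T)=\{0\}$ I would use the backward-chain description. If $x=\sum x_je_j\in\cc(T)$ with $x\neq0$, let $j_0$ be its first nonzero index. A preimage of $x$ under $T$ must have first nonzero index $j_0-1$, since $T$ shifts forward. So any backward chain $x_n$ must have its coordinate $j_0-n$ nonzero, and this is impossible for $n>j_0$. Thus $\Ran^\infty(T)=\{0\}$, and $\cc(T)\subseteq\Ran^\infty(T)$ gives $\cc(T)=\{0\}$.

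The main step is $\Ran(T)+\qn(T)=H$. Given $z\in H$, I would split it stage by stage, $z=z'+z''$. The part $z''$ consists of the coordinates sitting at indices where it is cheap to pull back by $T$, namely indices $j+1$ with $w_j=1$ or $w_j$ not too small. The part $z'$ lives on the indices immediately following the small weights, at the beginning of each stage. More precisely, I would pull back through the last $n$ (weight-$1$) slots of stage $n$ freely, and show that the remaining coordinates, those just after weights $2^{-m}$ with $m\ge1$, form a vector in $\qn(T)$. The reason is that any vector supported at the beginning of stage $n$, or inside its small-weight portion, is hit within about $n^2$ steps by the extremely small weights of stage $n+1$, $n+2,\dots$. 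The estimate for $\qn$ then has to hold uniformly over such supports, using the criterion $\sup_r k^r\|T^rx\|<\infty$ for all $k$ from \cite[Lemma~2.2]{AznayChabbabi2026}.

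The obstacle is the borderline coordinates: the preimage of a coordinate after a weight $2^{-m}$ costs a factor $2^{m}$, and this might not be square-summable. I expect I will need a finer split. Coordinates following weight $2^{-m}$ with $m$ small are put in $\Ran(T)$ at cost $2^m$ (bounded by splitting along $m\le n/2$?). Coordinates with $m$ large go into $\qn(T)$, since their forward orbit passes through many weights $\le 2^{-m}$ before reaching the $1$-block. This is the delicate calculation I would do carefully, and I expect it will require exploiting the fact that orbits starting at position $b_n+(n-m)n+s$ meet $(m-1)n$ further small weights before reaching the unit block. Once this identity is proved, the remaining claims follow. $\qn(T)\ne H$ holds because a vector with coordinates of size roughly $1/n$ spread over the unit blocks keeps norm, since $\|T^r\|=1$ is attained there. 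Non-pseudo-Fredholmness follows from Corollary~\ref{thm:characterization}, since $\cc(T)+\qn(T)=\qn(T)$ is dense but not closed.
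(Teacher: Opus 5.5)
Your arguments for injectivity, $\|T^r\|=1$, $T^rx\to0$, $\cc(T)=\{0\}$, density of $\qn(T)$ (via your direct bound on $W(j,r)$), $\qn(T)\neq H$ and the non-closed range are sound. Deducing non-pseudo-Fredholmness from the necessity half of Corollary~\ref{thm:characterization}, since $\cc(T)+\qn(T)=\qn(T)$ is dense and proper, is a valid shortcut.

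The gap is the identity $\Ran(T)+\qn(T)=H$, which you leave as ``the delicate calculation''. The splits you sketch do not work. Put $q_k(j)=\sup_{r\ge0}k^r\|T^re_j\|$. Orthogonality of the orbits gives $\sup_rk^r\|T^rh\|\ge\sup_j|h_j|\,q_k(j)$.

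Your first split sends to $\qn(T)$ every coordinate sitting just after a weight $2^{-m}$ with $m\ge1$. But $a_n=b_n+n^2$ sits just after the last weight $2^{-1}$ of stage $n$. So your own witness $v=\sum_n n^{-1}e_{a_n}\notin\qn(T)$ is supported there.

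In fact, no split that assigns coordinates to the two summands by index alone can work. Since $T$ is injective, pulling back a coordinate that follows a weight $2^{-m}$ (call it \emph{level} $m$) forces the factor $2^m$. So the pulled-back levels must be bounded, say by $m_0$. Your tentative cutoff $m\le n/2$ already fails for $z=\sum_n n^{-1}e_{j_n}$ with $j_n$ at level $\lfloor n/2\rfloor$. Then all of level $m=m_0+1$ is left for $\qn(T)$. For $j_n=b_n+(n-m+1)n$, the orbit next runs through $n$ weights each of $2^{-(m-1)},\dots,2^{-1},1$, so
\[
 q_k(j_n)\ \ge\ \bigl(k^m2^{-m(m-1)/2}\bigr)^n\longrightarrow\infty\qquad(n\to\infty)
\]
whenever $k>2^{(m-1)/2}$. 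Hence $\sum_n n^{-1}e_{j_n}\notin\qn(T)$. Your heuristic that such vectors are destroyed by the tiny weights of later stages ignores the factor $k^r$ accumulated before the orbit leaves stage $n$.

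The missing idea is to let the split depend on $z$. For each level $m$, choose $N_m\ge\max\{1,m\}$ such that the level-$m$ coordinates of $z$ in the stages $n\ge N_m$ have total squared norm at most $2^{-4(m+1)}$. Exclude the stage beginnings $b_n$, which stay in the remainder. Pull back exactly these coordinates; the preimage has squared norm at most $\sum_m 2^{2m}2^{-4(m+1)}<\infty$.

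Now fix $k\ge2$ and put $p=1+\lceil 2\log_2k\rceil$. The factors $kw_i$ are nondecreasing within a stage. A full stage $n\ge p$ has product $(k2^{-n/2})^{n(n+1)}\le1$. The rest of stage $n$ after a level-$m$ index has product $(k2^{-m})^t\bigl(k2^{-(m-1)/2}\bigr)^{mn}\le1$ when $m\ge p$, where $0\le t<n$. Hence $q_k=1$ on all levels $m\ge p$, $\sup_n q_k(b_n)<\infty$, and each $q_k(j)$ is finite.

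The remainder $h$ is therefore supported on the stage beginnings, on levels $m\ge p$, and on a finite set: the finitely many levels $m<p$ restricted to the stages $n<N_m$. Consequently
\[
\bigl(\sup_rk^r\|T^rh\|\bigr)^2\le\sum_j|h_j|^2q_k(j)^2<\infty
\]
for every $k$, and \cite[Lemma~2.2]{AznayChabbabi2026} gives $h\in\qn(T)$. The finite exceptional set depends on $k$. This is why the thresholds $N_m$ must be chosen level by level from $z$ alone, independently of $k$.
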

\begin{proof}
All weights lie in $(0,1]$. For $x=\sum_{j\ge0}x_je_j$, orthogonality of the vectors $Te_j$ gives
\[
 \|Tx\|^2=\sum_{j\ge0}|w_j|^2|x_j|^2\le\sum_{j\ge0}|x_j|^2=\|x\|^2.
\]
Therefore $T$ is a contraction. Since some weights are equal to $1$, we have $\|T\|=1$. All the weights are strictly positive, so $\Ker(T)=\{0\}$. For every $r\ge1$,
\begin{equation}\label{eq:shiftpowers}
 T^re_j=\left(\prod_{i=0}^{r-1}w_{j+i}\right)e_{j+r}.
\end{equation}
In particular every vector in $\Ran(T^r)$ has its first $r$ coordinates zero. If $x\in\bigcap_{r\ge1}\Ran(T^r)$, then its first $r$ coordinates vanish for every $r$, and hence $x=0$. Thus
\[
 \Ran^\infty(T)=\{0\},\qquad \cc(T)=\{0\},
\]
where the second equality follows from $\cc(T)\subseteq \Ran^\infty(T)$.
Let us next show that $T$ is not quasinilpotent. The final $n$ weights in stage $n$ equal $1$. With
\begin{equation}\label{eq:fn}
 a_n=b_n+n^2,\qquad f_n=e_{a_n},
\end{equation}
the vectors $f_n$ have norm $1$ and
\[
 T^rf_n=e_{a_n+r},\qquad 0\le r\le n.
\]
Indeed, $w_{a_n+i}=1$ for $0\le i<n$. It follows that $\|T^rf_n\|=1$ for $1\le r\le n$. For a fixed $r\ge1$, choose $n\ge r$. Thus $\|T^r\|\ge1$, while $\|T^r\|\le\|T\|^r=1$. Consequently $\|T^r\|=1$ for every $r\ge1$, and hence $r(T)=1$.

To control the orbits of the basis vectors, for an integer $k\ge1$
we shall use the norms introduced in \cite[Proposition~2.3]{AznayChabbabi2026}. In order to estimate them, put $p_k(x)=\sup_{r\ge0}k^r\|T^rx\|$ and $q_k(j)=p_k(e_j)$, allowing the value $+\infty$ initially. In the next lemma, we shall use the following elementary fact. If $0<c_1\le\cdots\le c_L$, and $P_r=\prod_{i=1}^r c_i$, $P_0=1$, then
\begin{equation}\label{eq:productmax}
 \max_{0\le r\le L}P_r=\max\{1,P_L\}.
\end{equation}

\Needspace{12\baselineskip}
\begin{lemma}\label{lem:orbit}
Fix an integer $k\ge2$ and set $p=1+\lceil2\log_2 k\rceil$. Then:
\begin{enumerate}[label=(\roman*)]
\item $q_k(b_n)=1$ for every $n\ge p$;
\item $q_k(b_n+(n-m)n+s+1)=1$ if $n\ge m\ge p$ and $0\le s<n$;
\item $q_k(j)<\infty$ for every fixed $j$, and $C_k:=\sup_{n\ge1}q_k(b_n)<\infty$.
\end{enumerate}
\end{lemma}
\begin{proof}
For (i), the factors $kw_j$ are nondecreasing within stage $n$, and their total product is
\[
 \prod_{j=b_n}^{b_{n+1}-1}kw_j
 =\prod_{m=0}^n(k2^{-m})^n
 =k^{n(n+1)}2^{-n^2(n+1)/2}
 =(k2^{-n/2})^{n(n+1)}\le1
\]
for $n\ge p$. Since the factors are nondecreasing on each stage,
\eqref{eq:productmax} gives
\[
 \prod_{j=b_\nu}^{b_\nu+t-1}kw_j\le1,
 \qquad \nu\ge p,\quad 0\le t\le b_{\nu+1}-b_\nu.
\]
For $r\ge0$, choose $N\ge n$ such that $b_N\le b_n+r<b_{N+1}$. Then
\[
 k^r\|T^re_{b_n}\|
 =\left(\prod_{\nu=n}^{N-1}\prod_{j=b_\nu}^{b_{\nu+1}-1}kw_j\right)
   \prod_{j=b_N}^{b_n+r-1}kw_j\le1.
\]
Consequently, $1\le q_k(b_n)\le1$.

For (ii), put $j=b_n+(n-m)n+s$ and $t=n-1-s$. Since
$m-1\ge2\log_2 k$, we have
\begin{align*}
 (k2^{-m})^t\prod_{u=0}^{m-1}(k2^{-u})^n
 &=(k2^{-m})^t\bigl(k^m2^{-m(m-1)/2}\bigr)^n\\
 &=(k2^{-m})^t\bigl(k2^{-(m-1)/2}\bigr)^{mn}\le1.
\end{align*}
The factors $kw_i$, $j+1\le i<b_{n+1}$, are nondecreasing.
Thus \eqref{eq:productmax} and (i) give
\[
 \prod_{i=j+1}^{j+r}kw_i\le1\quad(r\ge0),
 \qquad 1\le q_k(j+1)\le1.
\]

For (iii), fix $j$ and choose $N\ge p$ with $b_N\ge j$.
Put $L=b_N-j$. By (i),
\[
 q_k(j)\le\max_{0\le r\le L}\prod_{i=0}^{r-1}kw_{j+i}<\infty.
\]
Consequently,
\[
 C_k=\max\{1,q_k(b_1),\ldots,q_k(b_{p-1})\}<\infty.
\]
\end{proof}

For an arbitrary $z=\sum_{j\ge0}z_je_j$, orthogonality in \eqref{eq:shiftpowers} gives the useful inequality
\begin{equation}\label{eq:weightedPythagoras}
 p_k(z)^2\le\sum_{j\ge0}|z_j|^2q_k(j)^2.
\end{equation}
Indeed, by orthogonality, for every $r\ge0$ we have
$k^{2r}\|T^rz\|^2\le\sum_{j\ge0}|z_j|^2q_k(j)^2$.
Taking the supremum over $r$ gives \eqref{eq:weightedPythagoras}.

We can now determine the quasinilpotent part of $T$.
Lemma~\ref{lem:orbit}(iii) and \cite[Lemma~2.2]{AznayChabbabi2026} imply that every $e_j$ belongs to $\qn(T)$ (the estimate for $k=1$ follows from that for $k=2$). Thus $\qn(T)$ contains all finitely supported vectors and is dense in $H$.
Let us show that $\qn(T)\ne H$. The vectors $f_n$ in \eqref{eq:fn} are distinct basis vectors. Put
\[
 v=\sum_{n\ge1}\frac1n f_n\in H.
\]
For $r\ge1$, orthogonality of the images of distinct basis vectors gives
\[
 \|T^rv\|\ge\frac1r\|T^rf_r\|=\frac1r.
\]
On the other hand $\|T^rv\|\le\|v\|$. Hence
\[
 r^{-1/r}\le\|T^rv\|^{1/r}\le\|v\|^{1/r},
 \qquad \lim_{r\to\infty}\|T^rv\|^{1/r}=1.
\]
Thus $v\notin \qn(T)$, and $\qn(T)$ is not closed.
On the other hand, $T^rx\to0$ for every $x\in H$. This holds first on finitely supported vectors, which belong to $\qn(T)$. Given $x$ and $\varepsilon>0$, choose a finitely supported $u$ with $\|x-u\|<\varepsilon/2$. Then
\[
 \|T^rx\|\le\|T^r(x-u)\|+\|T^ru\|\le\varepsilon/2+\|T^ru\|<\varepsilon
\]
for all sufficiently large $r$. 

We now prove that $\Ran(T)+\qn(T)=H$. Let $y=\sum_{j\ge0}y_je_j\in H$.
We want to write $y=Tx+h$ with $x\in H$ and $h\in\qn(T)$.
Since the weights can be arbitrarily small, we shall lift only
some of the coordinates of $y$ through $T$.

To do this, we group the coordinates according to the weight immediately
preceding them. For $n\ge1$ and $0\le m\le n$, put
\[
 J_{n,m}=\{b_n+(n-m)n+s+1:0\le s<n\}\setminus\{b_{n+1}\},
\]
and let $E_{n,m}$ be the span of $\{e_j:j\in J_{n,m}\}$, with
$P_{n,m}$ its orthogonal projection. Thus
\[
 P_{n,m}y=\sum_{j\in J_{n,m}}y_je_j,
 \qquad w_{j-1}=2^{-m}\quad(j\in J_{n,m}).
\]
We exclude $b_{n+1}$ because it is the first index of the next stage.
For example, in stage $2$ the corresponding sets are
$J_{2,2}=\{3,4\}$, $J_{2,1}=\{5,6\}$ and $J_{2,0}=\{7\}$.
The indices $b_n$, together with the sets $J_{n,m}$, cover
$\mathbb N_0$ without repetition.

Fix $m\ge0$. Since the subspaces $E_{n,m}$ have disjoint coordinate
supports as $n$ varies, we have
\[
 \sum_{n\ge\max\{1,m\}}\|P_{n,m}y\|^2\le\|y\|^2.
\]
We may therefore choose an integer $N_m\ge\max\{1,m\}$ such that
\begin{equation}\label{eq:chosenNm}
 \sum_{n\ge N_m}\|P_{n,m}y\|^2\le 2^{-4(m+1)}.
\end{equation}
We make this choice for each $m$, independently of $k$.
The bound is chosen to compensate for the factor $2^m$ which appears
when we divide by a weight $2^{-m}$.

We take
\[
 y_R=\sum_{m\ge0}\sum_{n\ge N_m}P_{n,m}y
\]
and define
\[
 x=\sum_{m\ge0}\sum_{n\ge N_m}
       \sum_{j\in J_{n,m}}2^my_je_{j-1}.
\]
All the coordinate supports in these sums are disjoint. In particular,
using \eqref{eq:chosenNm}, we obtain
\begin{align*}
 \|x\|^2
 &=\sum_{m\ge0}2^{2m}\sum_{n\ge N_m}\|P_{n,m}y\|^2\\
 &\le\sum_{m\ge0}2^{2m-4(m+1)}
 =\frac1{12}<\infty.
\end{align*}
Hence $x\in H$. Moreover, for every $j\in J_{n,m}$,
\[
 T(2^my_je_{j-1})=2^my_jw_{j-1}e_j=y_je_j.
\]
It follows that $Tx=y_R$.

It remains to show that $h=y-Tx$ belongs to $\qn(T)$.
The coefficients of $h$ are exactly those which have not been used
in the construction of $x$. They occur either at the beginning $b_n$
of a stage or at indices in $J_{n,m}$ with $n<N_m$.

Fix an integer $k\ge2$ and let $p=1+\lceil2\log_2k\rceil$, as in
Lemma~\ref{lem:orbit}. We shall use the estimates of that lemma on
these remaining coordinates. At the stage beginnings we have
$q_k(b_n)\le C_k$. If $j\in J_{n,m}$ with $m\ge p$, then
$q_k(j)=1$ by Lemma~\ref{lem:orbit}(ii). Only the indices with
$m<p$ remain, and these form the finite set
\[
 F_k=\bigcup_{0\le m<p}
     \ \bigcup_{\max\{1,m\}\le n<N_m}J_{n,m}.
\]
By Lemma~\ref{lem:orbit}(iii), $q_k(j)<\infty$ for each $j\in F_k$.
Consequently,
\[
 \sum_{j\ge0}|h_j|^2q_k(j)^2
 \le \max\{1,C_k^2\}\|h\|^2
     +\sum_{j\in F_k}|h_j|^2q_k(j)^2<\infty.
\]
By \eqref{eq:weightedPythagoras}, $p_k(h)<\infty$.
This holds for every $k\ge2$, and also for $k=1$ because $T$ is a
contraction. The criterion in \cite[Lemma~2.2]{AznayChabbabi2026}
therefore gives $h\in\qn(T)$.

We have written the arbitrary vector $y\in H$ as
$y=Tx+h$, with $x\in H$ and $h\in\qn(T)$.
Thus $\Ran(T)+\qn(T)=H$.

Finally, we prove that $T$ is not pseudo-Fredholm.
Suppose $H=M\oplus N$ were a generalized Kato decomposition of $T$, with $S=T|_M$ semi-regular and $Q=T|_N$ quasinilpotent. Since $T$ is injective, $S$ is injective; its closed range therefore makes $S$ bounded below. For some $c>0$, $\|S^rm\|\ge c^r\|m\|$, so $\qn(S)=0$.
Let $P$ be the bounded projection onto $M$ along $N$. Invariance of both summands gives $PT=TP$. For $z\in \qn(T)$,
\[
 \|S^rPz\|=\|PT^rz\|\le\|P\|\|T^rz\|,
\]
so $Pz\in \qn(S)=0$. Since $\qn(T)$ is dense, $P=0$; hence $M=0$. This would make $T=Q$ quasinilpotent, contrary to $\|T^r\|=1$. Thus $T$ is not pseudo-Fredholm.
Finally, $\|Te_{b_n}\|=2^{-n}\to0$ while $\|e_{b_n}\|=1$. Injectivity and the bounded inverse theorem therefore show that $\Ran(T)$ is not closed. This completes the proof.
\end{proof}

The preceding constructions establish independence of the three closedness conditions.
\begin{proposition}\label{prop:independent}
For each of $\cc(T)$, $\cc(T)+\qn(T)$ and $\Ran(T)+\qn(T)$, there is a bounded operator on a separable complex Hilbert space for which that subspace is not closed but the other two are closed.
\end{proposition}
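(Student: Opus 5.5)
The proof amounts to collecting the three examples already constructed in this section and checking which subspaces are closed in each. All three operators act on separable complex Hilbert spaces: $\ell^2(\mathbb Z)$, $H\oplus E$ with $H=\ell^2(\mathbb N_0,\ell^2(\mathbb N))$ and $E=\ell^2(\mathbb N)$, and $\ell^2(\mathbb N_0)$.

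\emph{$\cc(T)$ not closed, the other two closed.} I take the bilateral weighted shift $B$ of \eqref{eq:bilshift}. By Proposition~\ref{prop:bil}, $\cc(B)+\qn(B)=\Ran(B)+\qn(B)=\ell^2(\mathbb Z)$, so both sums are trivially closed. By Proposition~\ref{prop:bilcores}, $\cc(B)$ is a dense proper subspace, hence not closed.

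\emph{$\Ran(T)+\qn(T)$ not closed, the other two closed.} I take the block operator $A$ of Example~\ref{rem:approx-spectrum-counterexample}. There $\qn(A)=\{0\}$, and, as recorded at the start of this section, $\cc(A)\subseteq\Ran^\infty(A)=\{0\}$. Hence $\cc(A)=\cc(A)+\qn(A)=\{0\}$ is closed, while $\Ran(A)+\qn(A)=\Ran(A)$ is not closed, since $\|A(0,e_n)\|=1/n$.

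\emph{$\cc(T)+\qn(T)$ not closed, the other two closed.} I take the unilateral weighted shift $T$ of \eqref{eq:weights}. By Theorem~\ref{thm:main}, $\cc(T)=\{0\}$ is closed and $\Ran(T)+\qn(T)=H$ is closed. Moreover $\cc(T)+\qn(T)=\qn(T)$, which by the same theorem is dense and different from $H$, so it is not closed.

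There is no real obstacle here. The delicate part, namely $\Ran(T)+\qn(T)=H$ for the staged shift, is already established in Theorem~\ref{thm:main}. The only point needing care is to note that in each case the two "closed" subspaces reduce to $\{0\}$ or the whole space, so their closedness is immediate.
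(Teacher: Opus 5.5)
Your proposal is correct and matches the paper's proof: the paper uses the same assignment, namely the bilateral shift $B$ for $\cc(T)$, the staged unilateral shift of Theorem~\ref{thm:main} for $\cc(T)+\qn(T)$, and the block operator $A$ of Example~\ref{rem:approx-spectrum-counterexample} for $\Ran(T)+\qn(T)$, and it simply cites the earlier results that you spell out. One step is left implicit: $\|A(0,e_n)\|=1/n\to0$ shows that $\Ran(A)$ is not closed only because $A$ is injective, and that example already establishes injectivity.
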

\begin{proof}
Use the bilateral shift $B$ for the first condition, the unilateral shift in Theorem~\ref{thm:main} for the second, and the block operator $A$ in Example~\ref{rem:approx-spectrum-counterexample} for the third.
\end{proof}

\begin{center}
\begin{tabular}{lccc}
\toprule
Operator & $\cc$ closed & $\cc+\qn$ closed & $\Ran+\qn$ closed\\
\midrule
Bilateral shift $B$ & No & Yes & Yes\\
Unilateral shift $T$ & Yes & No & Yes\\
Block operator $A$ & Yes & Yes & No\\
\bottomrule
\end{tabular}
\end{center}

The same examples also settle the independence of the Banach-space assumptions.

\begin{corollary}
None of the assumptions \textup{(C1)--(C4)} in
Theorem~\ref{th:suff} can be omitted.
\end{corollary}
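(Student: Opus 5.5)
For each of (C1)--(C4) we exhibit an operator that satisfies the other three conditions and is not pseudo-Fredholm. All of these operators already appear above. On a Hilbert space, closed subspaces are complemented, so there (C3) and (C4) reduce to closedness of $\Ker(T)\cap\cc(T)$ and of $\Ran(T)+\qn(T)$. Hence the three Hilbert-space examples of Proposition~\ref{prop:independent} can be reused for (C1), (C2) and (C4), while (C3) needs the Banach-space example of Section~\ref{sec:tsunified}.

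\emph{Omitting (C1).} Take the bilateral shift $B$ of \eqref{eq:bilshift}. By Proposition~\ref{prop:bil}, $\cc(B)+\qn(B)=\Ran(B)+\qn(B)=\ell^2(\mathbb Z)$, so (C2) holds. (C4) holds because the whole space is trivially complemented. By \eqref{eq:Binjective}, $\Ker(B)\cap\cc(B)=\{0\}$, which is complemented, so (C3) holds. By Proposition~\ref{prop:bilcores}, $\cc(B)$ is not closed, and $B$ is not pseudo-Fredholm.

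\emph{Omitting (C2).} Take the unilateral weighted shift $T$ of Theorem~\ref{thm:main}. There $\cc(T)=\{0\}$, which gives (C1). Since $T$ is injective, $\Ker(T)\cap\cc(T)=\{0\}$, which gives (C3). Also $\Ran(T)+\qn(T)=H$, which gives (C4). On the other hand, $\cc(T)+\qn(T)=\qn(T)$ is dense and proper, hence not closed, and $T$ is not pseudo-Fredholm.

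\emph{Omitting (C3).} Take $T_S$ from Corollary~\ref{cor:banachobstruction}, with $S$ as in Example~\ref{ex:separable}. Then $\cc(T_S)=E\oplus\{0\}$ is closed and $\cc(T_S)+\qn(T_S)=\Ran(T_S)+\qn(T_S)=X$, so (C1), (C2) and (C4) hold. By Proposition~\ref{thm:tsmaster}(i), $\Ker(T_S)\cap\cc(T_S)=\Ker(S)\oplus\{0\}$. This is the step that needs care: complementedness of $\Ker(S)\oplus\{0\}$ in $X=E\oplus_1E$ must be shown equivalent to complementedness of $\Ker(S)$ in $E$.
\begin{itemize}
\item If $P$ is a bounded projection of $X$ onto $\Ker(S)\oplus\{0\}$, compress it: $x\mapsto \pi_1P(x,0)$ is a bounded projection of $E$ onto $\Ker(S)$, where $\pi_1$ is the first-coordinate projection.
\item Conversely, a bounded projection $P_0$ of $E$ onto $\Ker(S)$ gives the projection $P_0\oplus 0$ of $X$ onto $\Ker(S)\oplus\{0\}$.
\end{itemize}
In Example~\ref{ex:separable} it is shown that $\Ker(S)$ is not complemented, so (C3) fails. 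By Proposition~\ref{thm:tsmaster}(iii), $T_S$ is not pseudo-Fredholm.

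\emph{Omitting (C4).} Take the block operator $A$ of Example~\ref{rem:approx-spectrum-counterexample}. Section~\ref{sec:independence} shows $\cc(A)=\{0\}$ and $\qn(A)=\{0\}$, so (C1), (C2) and (C3) hold trivially. However, $\Ran(A)+\qn(A)=\Ran(A)$ is not closed, hence not complemented. The example also shows that $A$ is not pseudo-Fredholm.

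In each case the remaining three assumptions hold and the conclusion of Theorem~\ref{th:suff} fails, so none of (C1)--(C4) can be dropped. The only step that is not a direct citation is the (C3) case, which rests on the projection compression above.
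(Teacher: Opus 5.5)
Your proposal is correct and follows the paper's proof essentially verbatim, using the same four operators for (C1)--(C4): the bilateral shift $B$, the staged unilateral shift of Theorem~\ref{thm:main}, $T_S$ from Example~\ref{ex:separable}, and the block operator $A$. Your compression $x\mapsto\pi_1P(x,0)$ simply spells out the paper's one-line remark that a bounded projection onto $\Ker(S)\oplus\{0\}$ yields one onto $\Ker(S)$; the converse direction you add is not needed.
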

\begin{proof}
For \textup{(C1)}, consider the bilateral shift $B$ from
Section~\ref{sec:mbekhta_questions}. Since
$\Ker(B)=\{0\}$ and
\[
\cc(B)+\qn(B)=\Ran(B)+\qn(B)=H,
\]
the other three conditions hold, but $B$ is not
pseudo-Fredholm.

For \textup{(C2)}, the unilateral shift $T$ in
Theorem~\ref{thm:main} satisfies
\[
\cc(T)=\Ker(T)=\{0\},\qquad
\Ran(T)+\qn(T)=H,
\]
but is not pseudo-Fredholm. Thus \textup{(C1), (C3)}
and \textup{(C4)} are not sufficient.

For \textup{(C3)}, take the operator $T_S$ from
Example~\ref{ex:separable}. It satisfies
\textup{(C1), (C2)} and \textup{(C4)}, but is not
pseudo-Fredholm. Indeed,
\[
\Ker(T_S)\cap\cc(T_S)=\Ker(S)\oplus\{0\}
\]
is not complemented in $E\oplus_1 E$, since a projection onto it
would give a bounded projection onto $\Ker(S)$ in $E$.

Finally, the block operator $A$ from
Example~\ref{rem:approx-spectrum-counterexample}
satisfies
\[
\cc(A)=\qn(A)=\{0\},
\]
while $\Ran(A)$ is not closed. Thus \textup{(C1), (C2)}
and \textup{(C3)} hold, but $A$ is not
pseudo-Fredholm. Hence \textup{(C4)} cannot
be omitted either.
\end{proof}

\end{document}